\newcounter{dummy} \numberwithin{dummy}{section}
\newtheorem{theo}[dummy]{Theorem }
 \newtheorem{coro}[dummy]{Corollary}
 \newtheorem{lem}[dummy]{Lemma}
 \newtheorem{pro}[dummy]{Proposition}
\newtheorem{deft}[dummy]{Definition}
\newtheorem{exe}[dummy]{Example}
\newtheorem{rem}[dummy]{Remark}
\title[On Iwasawa theory of Rubin-Stark units and narrow class groups ]{On Iwasawa theory of
 Rubin-Stark units and narrow class groups}
\author[ Youness Mazigh]{ Youness Mazigh }
\address{ Université Moulay Ismaïl,
 Département de mathématiques,
Faculté des sciences de Meknès, B.P. 11201 Zitoune, Meknès, Maroc.}
\email{\textcolor[rgb]{0.00,0.00,1.00}{y.mazigh@edu.umi.ac.ma}}
 \keywords{ Iwasawa theory, Selmer structure, Euler systems, Totally positive Galois cohomology}
 \subjclass[2010]{11R23, 11R27, 11R29, 11R42}
\begin{document}
\renewcommand{\abstractname}{Abstract}
\begin{abstract}
Let $K$ be a totally real number field of degree $r$.  Let
$K_{\infty}$ denote the cyclotomic $\mathbb{Z}_{2}$-extension of $K$
and let $L_{\infty}$ be a finite extension of $K_{\infty}$, abelian
over $K$.  The goal of this paper is to compare the characteristic
ideal of the $\chi$-quotient of the projective limit of the narrow
class groups to the $\chi$-quotient of the projective limit of the
$r$-th exterior power of totally positive units modulo a subgroup of
Rubin-Stark units, for some $\overline{\mathbb{Q}_{2}}$-irreducible
characters $\chi$ of $\mathrm{Gal}(L_{\infty}/K_{\infty})$.
\end{abstract}
\maketitle
\section{Introduction}
 Let $K$ be a  number field and let $Cl_{K}^{+}$ denote the narrow class group of $K$, that
is, the quotient group of the group of fractional ideals of $K$
modulo the subgroup of principal fractional ideals generated by a
 totally positive element $\alpha$ of $K$, $\mathrm{i.e.}$ $\alpha$ is an element
of $K^{\ast}$ such that $\sigma(\alpha)$ is positive for every
embedding $\xymatrix@=1.5pc{\sigma :\, K \ar[r]& \mathbb{R}}$. The
natural homomorphism of $Cl^{+}_{K}$ onto the ideal class group of
$K$ induces, for every odd prime $p$, an isomorphism of the
$p$-primary component of $Cl_{K}^{+}$ onto the $p$-class group of
$K$. But the $2$-primary components are not necessarily  isomorphic.
Before we explain our results in  details, we set some notation.
\vskip 6pt Let $K$ be a totally real number field of degree
$r=[K:\mathbb{Q}]$. Let $K_{\infty}$ denote the cyclotomic
$\mathbb{Z}_{2}$-extension of $K$ and $L_{\infty}$ a finite
extension of $K_{\infty}$, abelian over $K$. Fix a decomposition of
\begin{equation*}
\mathrm{Gal}(L_{\infty}/K)=\mathrm{Gal}(L_{\infty}/K_{\infty})\times
\Gamma,\; \Gamma\simeq \mathbb{Z}_{2}.
\end{equation*}
Then the fields $L:=L_{\infty}^{\Gamma}$ and $K_{\infty}$ are
linearly disjoint over $K$. \vskip 7pt If $F/K$ is a finite abelian
extension of $K$, we write $A^{+}(F)$ for the $2$-part of the narraw
class group of $F$ and $\mathcal{E}^{+}(F)$ for the group of the
totally positive units of $F$. For a $\mathbb{Z}$-module $M$, let
$\widehat{M}=\varprojlim M/2^{n}M$ denote the $2$-adic completion of
$M$. Let
\begin{equation*}
A^{+}_{\infty}:=\varprojlim A^{+}(F)\quad\quad\mbox{and}\quad\quad
\widehat{\mathcal{E}^{+}_{\infty}}:=\varprojlim
\widehat{\mathcal{E}^{+}(F)},
\end{equation*}
where the projective limit is taken over all finite sub-extensions
of $L_{\infty}$, with respect to  the norm maps. Let
\begin{equation*}
\xymatrix@=2pc{ \chi : G_{K}\ar[r]&
\overline{\mathbb{Q}}_{2}^{\times}}
\end{equation*}
be a non-trivial totally even character of the absolute Galois
$G_{K}$ of $K$ ($\mathrm{i.e.}$, it is trivial on all complex
conjugations inside $G_{K}$) that  factors through $L$. Denote the
ring generated by the values of $\chi$ over $\mathbb{Z}_{2}$  by
$\mathcal{O}$, and let $\Delta$ be the Galois group
$\mathrm{Gal}(L/K)$. Let $\mathcal{O}(\chi)$ denote the ring
$\mathcal{O}$ on which $\Delta$ acts via $\chi$. For any
$\mathbb{Z}_{2}[\Delta]$-module $M$, we define the $\chi$-quotient
$M_{\chi}$ of $M$ by:
\begin{equation*}
M_{\chi}:=M\otimes_{\mathbb{Z}_{2}[\Delta]}\mathcal{O}(\chi).
\end{equation*}
For any profinite group $\mathcal{G}$, we define the Iwasawa algebra
\begin{equation*}
\mathcal{O}[[\mathcal{G}]]:=\varprojlim
\mathcal{O}[\mathcal{G}/\mathcal{H}]
\end{equation*}
where the projective limit is over all finite quotient
$\mathcal{G}/\mathcal{H}$ of $\mathcal{G}$. In case
$\mathcal{G}=\Gamma$, we shall write
\begin{equation*}
\Lambda:=\mathcal{O}[[\Gamma]].
\end{equation*}
 Let $L_{\chi}$ denote the fixed field of $\ker(\chi)$ and let $K(1)$ be the
maximal $2$-extension inside the Hilbert class field of $K$. In the
sequel we will assume  (for simplicity) that
\begin{equation*}\label{simplicity}
L=L_{\chi} \quad\mbox{and}\quad K=L\cap K(1).
\end{equation*}
In particular, $L$ is totally real.\vskip 6pt
 For a $2$-adic prime $\mathfrak{p}$ of $K$, let
 $\mathrm{Frob}_{\mathfrak{p}}$ denote a Frobenius element at
 $\mathfrak{p}$ inside the absolute Galois group of $K$.
 Assume that
\begin{center}
  \begin{tabbing}
  \hspace{0.5cm}\=\hspace{0.4cm} \= \hspace{0.4cm} \= \kill
  \>$(\mathcal{H}_{1})$ \> \> \textit{ the extension $L/\mathbb{Q}$ is unramified at $2$,}
\end{tabbing}
\end{center}
\begin{center}
  \begin{tabbing}
  \hspace{0.5cm}\=\hspace{0.4cm} \= \hspace{0.4cm} \= \kill
  \>$(\mathcal{H}_{2})$ \> \> \textit{  for any $2$-adic prime $\mathfrak{p}$ of $K$,
  we have $\chi(\mathrm{Frob}_{\mathfrak{p}})\neq
    1$, }
\end{tabbing}
\end{center}
\begin{center}
  \begin{tabbing}
  \hspace{0.5cm}\=\hspace{0.4cm} \= \hspace{0.4cm} \= \kill
  \>$(\mathcal{H}_{3})$ \> \> \textit{
   the Leopoldt  conjecture holds for every finite extension $F$ of $L$ in $L_{\infty}$ for the prime $2$.}
\end{tabbing}
\end{center}
 We will denote by
$\widehat{\mathrm{St}^{+}_{\infty}}$ the projective limit
$\displaystyle{\varprojlim_{n}}\widehat{St_{n}^{+}}$, where
$St_{n}^{+}$ is the module constructed by the Rubin-Stark elements
(see Definition \ref{Definition of Rubin Strak module}). In
particular $\widehat{\mathrm{St}^{+}_{\infty}}$ is a submodule of
$\displaystyle{\bigwedge^{r}}\widehat{\mathcal{E}^{+}_{\infty}}$.\\
In \cite{Ma 16}, for a fixed odd rational prime $p$, we used the
theory of Euler systems to bound the size of the $\chi$-quotient of
the $p$-class groups by the characteristic ideal of the
$\chi$-quotient of the $r$-th exterior power of units modulo
Rubin-Stark units, in the non semi-simple case, extending thus
results
of \cite{Kazim109}. \\
In this paper, we consider the case $p=2$. More precisely, we use
the Euler system formed by Rubin-Stark elements to compare the
characteristic ideal of the $\chi$-quotient of the projective limit
of the $2$-part of the narrow class groups to the $\chi$-quotient of
the projective limit of the $r$-th exterior power of totally
positive units modulo $\widehat{\mathrm{St}^{+}_{\infty}}$. We draw
the attention of the reader to the fact that, because of many
complications, the case $p=2$ is not often treated in the
literature, unlike \cite{Greither92, Oukhaba 12}. The following
theorem summarizes our results.
\begin{theo}\label{Mazigh 2}
 Assume that the hypotheses $(\mathcal{H}_{1}),(\mathcal{H}_{2}) \,\mbox{and}\, (\mathcal{H}_{3})$
 hold. Then
\begin{equation*}
\mathrm{char}((A^{+}_{\infty})_{\chi})\quad \mbox{divides}\quad
\lambda. \mathrm{char}
\big(((\displaystyle{\bigwedge^{r}}\widehat{\mathcal{E}^{+}_{\infty}})/\widehat{\mathrm{St}^{+}_{\infty}})_{\chi}\big)
\end{equation*}
where $\lambda$ is a power of $2$ explicitly given in formula
$(\ref{lamda factur})$.
\end{theo}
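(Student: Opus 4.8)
The strategy is the standard Euler-system divisibility argument, adapted to the prime $2$ and to the totally positive (narrow) setting. The plan is to:

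\begin{itemize}
\item[(1)] Set up the Iwasawa-theoretic Selmer module. Over each finite layer $F$ in $L_\infty$, one has the Rubin--Stark element lying in $\bigwedge^r \widehat{\mathcal E^+(F)}$, and these are compatible under the norm maps (this is the Euler-system distribution relation for Rubin--Stark elements), hence assemble into a class in $\bigwedge^r \widehat{\mathcal E^+_\infty}$ generating $\widehat{\mathrm{St}^+_\infty}$. The hypotheses $(\mathcal H_1)$ and $(\mathcal H_2)$ are exactly what is needed to control the local conditions at the $2$-adic primes: $(\mathcal H_2)$ guarantees that $\chi(\mathrm{Frob}_{\mathfrak p})\neq 1$ so that the Euler factors are units after twisting and the local cohomology at $2$ is ``trivialized'' in the $\chi$-component, while $(\mathcal H_1)$ ensures the relevant local Galois modules are unramified so the Euler system machinery applies cleanly.
\item[(2)] Apply the Euler-system-to-Selmer-group bound. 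Via Kolyvagin's derivative construction over $\Lambda$ — or more precisely its refinement to exterior-power (rank $r$) Euler systems — one produces, from the Rubin--Stark Euler system, enough relations in the dual Selmer module to bound $\mathrm{char}\big((A^+_\infty)_\chi\big)$ by the index of $\widehat{\mathrm{St}^+_\infty}$ inside $\bigwedge^r\widehat{\mathcal E^+_\infty}$ in the $\chi$-component.
\item[(3)] Track the $2$-power discrepancy $\lambda$. Because $p=2$, the usual Chebotarev/linear-algebra arguments that choose Kolyvagin primes lose powers of $2$ (there is no ``large enough'' cyclotomic field disjointness as in the odd case, complex conjugation acts nontrivially, and the transition between the narrow and ordinary class group in (3) of the class-field-theory dictionary contributes a bounded $2$-torsion factor). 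Collecting all these contributions gives the explicit power $\lambda$ of formula $(\ref{lamda factur})$.
\end{itemize}

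More concretely, I would first reinterpret $(A^+_\infty)_\chi$ as (the $\chi$-part of) a global Galois cohomology group — a ``totally positive'' $H^1$ with local conditions — using the hypotheses and Leopoldt $(\mathcal H_3)$ to identify it with a dual Selmer module $X_\infty$ that is finitely generated and torsion over $\Lambda$. Simultaneously I would identify $\big((\bigwedge^r\widehat{\mathcal E^+_\infty})/\widehat{\mathrm{St}^+_\infty}\big)_\chi$ with the cokernel of the Rubin--Stark map into the compact Selmer module, so that the theorem becomes the assertion $\mathrm{char}(X_\infty)\mid \lambda\cdot\mathrm{char}(\text{coker})$. Then I would invoke (a $2$-adic, $r$-variable version of) the Euler system divisibility theorem: the Rubin--Stark Euler system, being nonzero (its nontriviality in the $\chi$-component follows from $(\mathcal H_2)$ together with the Rubin--Stark conjecture/known cases in this abelian-over-$K$ setting), bounds the characteristic ideal of the dual Selmer module up to the controlled factor $\lambda$.

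The main obstacle — and the place where the bulk of the work lies — is Step (3), the careful bookkeeping of $2$-power losses. In the odd-$p$ Euler system formalism one freely uses that $\mu_p\not\subset K_\infty$ and that $H^1$ has no $p$-torsion coming from $\mu_p$; for $p=2$ neither holds, complex conjugation obstructs the usual ``$\tau$ acts by $-1$'' dichotomy, and the Kolyvagin derivative relations hold only modulo a power of $2$ rather than exactly. One must also handle the difference between the Kolyvagin-prime Chebotarev argument producing elements with prescribed image (which over $\mathbb Z_2$ costs an extra factor because the image of the cyclotomic character has index $2$) and the passage, via $(\mathcal H_1)$, through the semi-local cohomology at $2$. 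Assembling these into the single explicit exponent of $2$ appearing as $\lambda$ in $(\ref{lamda factur})$ — and checking that no further uncontrolled factor appears when one takes the projective limit over the layers of $L_\infty/K_\infty$ — is the technical heart of the proof; the rest is a formal consequence of the Euler system machinery recalled earlier in the paper.
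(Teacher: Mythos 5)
Your high-level framing---bound $(A^+_\infty)_\chi$ by a modified Selmer structure and apply an Euler-system divisibility theorem to the Rubin--Stark system---is the correct strategy, but your account of where the difficulties lie, and of where the $2$-power $\lambda$ comes from, does not match the paper's argument and misses the technical content that makes the strategy work. The paper does not absorb the infinite places inside a Kolyvagin/Chebotarev step. Instead it introduces Kahn's \emph{totally positive Galois cohomology} $H^*_+(G_{F,\Sigma},\cdot)$, and uses it to: (a) define the positive Selmer structure $\mathcal{F}^+$, whose dual Selmer group packages the narrow class group (Lemma~\ref{Lemma narrow class} and Proposition~\ref{chi quotient of restrincted class group}); (b) prove a Galois-descent statement from $L_\infty$ to $K_\infty$ (Proposition~\ref{Proposition pseudo isomorph}, via a snake-lemma argument and the corestriction isomorphism on $H^2_+$ of Corollary~\ref{corolary coresriction}); and (c) establish $\Lambda$-freeness of $H^1_{\mathcal{F}_{can}^{+}}(K_\infty,T)$ and $H^1_{\mathcal{F}_{can}}(K_\infty,T)$ by Vauclair's theorem on universal norms (Theorem~\ref{Theorem free}). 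The last point is indispensable: without $\Lambda$-freeness of equal rank $r$, the comparison of characteristic ideals of $r$-th exterior powers across the exact sequence relating $\mathcal{F}^+_{can}$ to $\mathcal{F}_{can}$ (Proposition~\ref{proposition mazigh 1}) breaks down. None of this appears in your plan, and ``reinterpret $(A^+_\infty)_\chi$ as a totally positive $H^1$'' is asserted rather than constructed.

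Your explanation of $\lambda$ is also substantively wrong. You attribute the $2$-power loss to Kolyvagin-derivative relations holding only modulo $2$-powers and to a Chebotarev-prime-selection cost caused by the cyclotomic character having index-$2$ image; neither of these appears in the paper. Lemma~\ref{lemme mazigh} transfers the Euler-system divisibility from [Ma~16] without such a Chebotarev loss, because Greither's structure theorem is already available for $p=2$. The $\lambda$ in formula~(\ref{lamda factur}) comes from three concrete sources: (i) the factor $\eta=\mathrm{lcm}\big(1-\chi(\mathrm{Frob}_{\mathfrak p})\big)$ over $2$-adic $\mathfrak p$, needed to place the Rubin--Stark class in the compact positive Selmer module; (ii) $\mathrm{char}\big(\mathrm{coker}(N_\Delta^{(r)})\big)$, a Tate-cohomology term arising from $\Delta$-descent on the $r$-th exterior power of totally positive units; and (iii) $\mathrm{char}\big(\oplus_{v\mid\infty}H^{1}_{Iw}(K_v,T)\big)=2^{r}$, the contribution of the real places appearing in the comparison exact sequence of Proposition~\ref{proposition mazigh 1}. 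A working proof must isolate these three contributions precisely; the heuristic ``bookkeeping of $2$-power losses'' you describe would not locate any of them, and your proposed sources (narrow-versus-ordinary class group transition, index-$2$ image of the cyclotomic character) are not where the factor actually lives.
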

Treating the case  $p=2$ leads to several complications. The first
comes from the non triviality of the cohomology groups of the
absolute Galois group of $\mathbb{R}$. More precisely, for a number
field $F$ and a real place $w$ of $F$, the cohomology group
$H^{i}(F_{w},M)$ is not necessarily trivial, where $M$ is a
$\mathbb{Z}_{p}[G_{F_{w}}]$-module. Hence the result of
\cite[Proposition 3.8]{AMO1} do not apply, since the cohomological
dimension of $G_{K,\Sigma}$ is infinite. The second complication is
the need to modify the canonical Selmer structure
$\mathcal{F}_{can}$, and to study the $\Lambda$-structure of the
projective limit of these Selmer groups. This problem is treated in
subsection $\ref{section Iwasawa theory}$. For this we use a
relation between the universal norms in $\mathbb{Z}_{p}$-extension
and the $\Lambda$-structure of certain modules. This is already
known, thanks to Vauclair who applied some homological proprieties
in \cite{Va 06, Va 09} to determine this relation.
 \vskip 6pt To control the
contributions from infinite places, we use a slight variant of
Galois cohomology, the so-called totally positive Galois cohomology
$H^{\ast}_{+}(G_{K,\Sigma},.)$, see subsection \ref{subsection
totally positive Galois cohomology}, introduced by Kahn in \cite{Ka
93}, based on ideas of Milne \cite{Milne}. Totally positive Galois
cohomology has been used by several authors, such as \cite{CKPS} and
\cite{Assim mova}.\vskip6pt
 \noindent\textbf{ Acknowledgements.} I am very grateful to J. Assim and H. Oukhaba for
making several helpful comments and for their careful reading of the
preliminary versions.
\section{\bf Iwasawa theory of Selmer groups}\label{section Selmer structures}
\subsection{Selmer structures}
In this subsection we recall  some definitions concerning the notion
of Selmer structure introduced by Mazur and Rubin in \cite{MR04,MR
16}.
 For any field $k$  and a fixed separable algebraic closure
$\overline{k}$ of $k$, we write
$G_{k}:=\mathrm{Gal}(\overline{k}/k)$ for the Galois group of
$\overline{k}/k$. Let $\mathcal{O}$ be the ring of integers of a
finite extension $\Phi$ of $\mathbb{Q}_{2}$ and  let $D$ denote the
divisible module $\Phi/\mathcal{O}$. For a $2$-adic representation
$T$ with coefficients in $\mathcal{O}$, we  define
\begin{equation*}
 D(1)=D\otimes \mathbb{Z}_{2}(1),\quad\quad
T^{\ast}=\mathrm{Hom}_{\mathcal{O}}(T,D(1)),
\end{equation*}
where $\mathbb{Z}_{2}(1):=\varprojlim \mu_{2^{n}}$ is the Tate
module.\vskip 6pt For a number field $F$, let $F_{w}$ denote the
completion of $F$ at a given place $w$ of $F$. Let us recall the
local duality theorem $\mathrm{cf}.$\,\cite[Corollary I.2.3
]{Milne}: For $i=0,1,2$, there is a perfect  pairing
\begin{equation}\label{local dualty theorem}
\begin{array}{cccccccc}
  H^{2-i}(F_{w},T) & \times & H^{i}(F_{w},T^{\ast})& \xymatrix@=3pc{\ar[r]^-{\langle\;,\;
  \rangle_{w}}&} &H^{2}(F_{w},D(1))\cong D,\mbox{if $w$ is finite},\\
 &  &   &  &\\
  \widehat{H}^{2-i} (F_{w},T)& \times & \widehat{H}^{i}(F_{w},T^{\ast}) & \xymatrix@=3pc{\ar[r]^-{\langle\;,\;\rangle_{w}}&} &
\widehat{H}^{2}(F_{w},D(1)),\mbox{if $w$ is infinite}
\end{array}
\end{equation}
where $\widehat{H}^{\ast}(F_{w},.)$ denotes the Tate cohomology
group.\vskip 6pt
\begin{deft} Let $T$ be a $2$-adic representation of $G_{F}$ with
coefficients in $\mathcal{O}$ and let $w$ be a non $2$-adic prime of
$F$. A local condition $\mathcal{F}$ at the prime $w$  on $T$ is a
choice of an $\mathcal{O}$-submodule $H^{1}_{\mathcal{F}}(F_{w},T)$
of $H^{1}(F_{w},T)$. For the $2$-adic primes, a local condition at
$2$ will be a choice of an $\mathcal{O}$-submodule
$H^{1}_{\mathcal{F}}(F_{2},T)$ of the semi-local cohomology group
\begin{equation*}
H^{1}(F_{2},T):=\oplus_{w\mid 2}H^{1}(F_{w},T).
\end{equation*}
\end{deft}
Let $I_{w}$ denote the inertia subgroup of $G_{F_{w}}$. We say that
$T$ is unramified at $w$ if the inertia subgroup $I_{w}$ of $w$ acts
trivially on $T$. We assume in the sequel that $T$ is unramified
outside a finite set of places of $F$.
\begin{deft}
A Selmer structure $\mathcal{F}$ on $T$ is a collection of the
following data:
\begin{itemize}
    \item a finite set $\Sigma(\mathcal{F})$ of places of $F$, including all infinite places, all $2$-adic places and all
    places where $T$ is ramified,
    \item  a local condition on
    $T$, for every $v\in \Sigma(\mathcal{F})$.
\end{itemize}
If $w\not\in \Sigma(\mathcal{F})$ we will write
$H^{1}_{\mathcal{F}}(F_{w},T)=H^{1}_{ur}(F_{w},T)$, where
$H^{1}_{ur}(F_{w},T)$ is the subgroup of unramified cohomology
classes:
\begin{equation*}
H^{1}_{ur}(F_{w},T)=\ker(\xymatrix@=2pc{H^{1}(F_{w},T)\ar[r]&
H^{1}(I_{w},T)}).
\end{equation*}
\end{deft}
 If $\mathcal{F}$ is a Selmer structure on $T$, we define the Selmer
 group $H^{1}_{\mathcal{F}}(F,T)\subset H^{1}(F,T)$ to be the kernel of the localization map
\begin{equation*}\label{Selmer group definition}
\xymatrix@=2pc{H^{1}(G_{F,\Sigma(\mathcal{F})},T)\ar[r]&
\displaystyle{\bigoplus_{w\in\Sigma(\mathcal{F})}}(H^{1}(F_{w},T)/H^{1}_{\mathcal{F}}(F_{w},T))},
\end{equation*}
where
$G_{F,\Sigma(\mathcal{F})}:=\mathrm{Gal}(F_{\Sigma(\mathcal{F})}/F)$
is the Galois group of the maximal algebraic extension of $F$
unramified outside $\Sigma(\mathcal{F})$.\vskip 6pt A  Selmer
structure $\mathcal{F}$ on $T$ determines a Selmer structure
$\mathcal{F}^{\ast}$ on $T^{\ast}$. Namely,
\begin{equation*}
\Sigma(\mathcal{F})=\Sigma(\mathcal{F}^{\ast}),\quad
H^{1}_{\mathcal{F}^{\ast}}(F_{w},T^{\ast}):=H^{1}_{\mathcal{F}}(F_{w},T)^{\perp},\;\;
\mbox{if $w\in\Sigma(\mathcal{F}^{\ast})-\Sigma_{2}$}
\end{equation*}
 under the local Tate pairing $\langle\;,\;\rangle_{w}$
and
\begin{equation*}
H^{1}_{\mathcal{F}^{\ast}}(F_{2},T^{\ast}):=H^{1}_{\mathcal{F}}(F_{2},T)^{\perp},
\end{equation*}
 under the pairing $\oplus_{w\mid 2}\langle\;,\;\rangle_{w}$. Here
 $\Sigma_{2}$ denotes the set of $2$-adic places of $F$. \vskip 6pt
There is a natural partial ordering on the set of Selmer structures
on $T$. Namely, we will say that $\mathcal{F}\leq
\mathcal{F}^{\prime}$ if and only if
\begin{equation*}
H^{1}_{\mathcal{F}}(F_{w},T)\subset
H^{1}_{\mathcal{F}^{\prime}}(F_{w},T)\;\;\mbox{for all places $w$.}
\end{equation*}
If  $\mathcal{F}\leq \mathcal{F}^{\prime}$ we have an exact sequence
\cite[Theorem 2.3.4]{MR04}
\begin{equation}\label{exacte sequence of selmer structures}
\xymatrix@=1pc{H^{1}_{\mathcal{F}}(F,T)\ar@{^{(}->}[r]&H^{1}_{\mathcal{F}^{\prime}}(F,T)\ar[r]&
\bigoplus_{w}H^{1}_{\mathcal{F}^{\prime}}(F_{w},T)/H^{1}_{\mathcal{F}}(F_{w},T)\ar[r]&
H^{1}_{\mathcal{F}^{\ast}}(F,T^{\ast})^{\vee}\ar@{->>}[r]&
H^{1}_{\mathcal{F}^{\prime,\ast}}(F,T^{\ast})^{\vee}}
\end{equation}
\begin{exe}\label{Example unramified local condition}
 Let $w$ be a place of $F$ and  let $F_{w}^{ur}$ denote the maximal
 unramified extension of $F_{w}$. Define the subgroup of universal
 norms
\begin{equation*}
H^{1}(F_{w},T)^{u}=\bigcap_{F_{w}\subset k\subset
F_{w}^{ur}}\mathrm{cor}_{k,F_{w}}H^{1}(k,T),
\end{equation*}
 where the intersection is over all finite unramified extensions $k$ of $F_{w}$.
Let $H^{1}(F_{w},T)^{u,sat}$ denote the $\mathcal{O}$-saturation of
$H^{1}(F_{w},T)^{u}$ in $H^{1}(F_{w},T)$, $\mathrm{ i.e.}$,
$H^{1}(F_{w},T)/H^{1}_{\mathcal{F}_{ur}}(F_{w},T)$ is a free
$\mathcal{O}$-module and
$H^{1}_{\mathcal{F}_{ur}}(F_{w},T)/H^{1}(F_{w},T)^{u}$ has finite
length. Following \cite[Defintition 5.1]{MR 16}, we define the
unramified Selmer structure $\mathcal{F}_{ur}$ on $T$ by\vskip 6pt
\begin{itemize}
    \item $\Sigma(\mathcal{F}_{ur}):=\{\mathfrak{q}:\, T\,\mbox{is ramified
    at}\,\mathfrak{q}\}\cup\{\mathfrak{p}:\,\mathfrak{p}\mid 2\}\cup
    \{ v:\, v\mid\infty\}$
 \item
 $H^{1}_{\mathcal{F}_{ur}}(F_{w},T)=\left\{
                                               \begin{array}{ll}
                                                H^{1}(F_{w},T)^{u,sat} , & \hbox{if $w\nmid p\infty$;} \\
                                                H^{1}(F_{w},T) , & \hbox{if $w\mid \infty$.}
                                               \end{array}
                                             \right.
                                             $, and\;
                                             $H^{1}_{\mathcal{F}_{ur}}(F_{2},T)=\bigoplus_{\mathfrak{p}\mid
2}H^{1}(F_{\mathfrak{p}},T)^{u,sat}$.
\end{itemize}
For future use, we record here the following well-known properties
of unramified Selmer structure
\begin{enumerate}[label=(\roman*)]
    \item
    \begin{equation}\label{unramified properties}
    H^{1}_{\mathcal{F}_{ur}^{\ast}}(F_{w},T^{\ast})=H^{1}_{ur}(F_{w},T^{\ast})_{div},\quad
H^{1}_{\mathcal{F}_{ur}^{\ast}}(F_{2},T^{\ast})=\bigoplus_{\mathfrak{p}\mid
2}H^{1}_{ur}(F_{\mathfrak{p}},T^{\ast})_{div}.
\end{equation}
    \item If $w\nmid 2$ and $T$ is
unramified at $w$, then
\begin{equation*}
H^{1}_{\mathcal{F}_{ur}}(F_{w},T)=H^{1}_{ur}(F_{w},T)\quad\mbox{and}\quad
H^{1}_{\mathcal{F}_{ur}^{\ast}}(F_{w},T^{\ast})=H^{1}_{ur}(F_{w},T^{\ast}).
\end{equation*}
\item Let $Cl_{F}$ denote the ideal class group of $F$. Then
\begin{equation*}\label{Structure and class group}
H^{1}_{\mathcal{F}_{ur}^{\ast}}(F,\mathbb{Q}_{2}/\mathbb{Z}_{2})^{\vee}\cong
Cl_{F}\otimes \mathbb{Z}_{2}.
\end{equation*}
\end{enumerate}
 where for an
abelian group $A$, $A_{div}$ denotes the maximal divisible subgroup
of $A$, and $()^{\vee}$ denotes the Pontryagin dual.\\
The assertion $(\textrm{i})$ follows from \cite[\S 2.1.1,
Lemme]{PR92} and the assertions $(\textrm{ii})$ and $(\textrm{iii})$
follow immediately from  \cite[Lemma 1.3.5]{Rubin00} and \cite[\S
6.1]{MR04} respectively.
\end{exe}
\subsection{Totally
positive Galois cohomology}\label{subsection totally positive Galois
cohomology} Let $\Sigma$ be a finite set of places of $F$ containing
infinite places and all  $2$-adic places. If $F^{\prime}$ is an
extension of $F$, we denote also by $\Sigma$ the set of places of
$F^{\prime}$ lying above places in $\Sigma$. Let $G_{F,\Sigma}$ be
the Galois group of the maximal algebraic extension $F_{\Sigma}$ of
$F$ which is unramified outside $\Sigma$. If $w$ is a place of $F$,
we denote the decomposition group of $w$
in $\overline{F}/F$ by $G_{w}$.\\
For a finite $\mathcal{O}[G_{F,\Sigma}]$-module $M$, we write
$M_{+}$ for the the cokernel of the injective map $\xymatrix@=2pc{
M\ar[r]& \displaystyle{\oplus_{w\mid
\infty}\mathrm{Ind}^{G_{F}}_{G_{w}}}M}$;
\begin{equation*}
\xymatrix@=2pc{0\ar[r]&M\ar[r]&\displaystyle{\oplus_{w\mid
\infty}\mathrm{Ind}^{G_{F}}_{G_{w}}}M\ar[r]&M_{+}\ar[r]&0},
\end{equation*}
 where
$\mathrm{Ind}^{G_{K}}_{G_{w}}M$ denotes the induced module.
Following \cite{Ka 93}, we define the $i$-$\mathrm{th}$ totally
positive Galois cohomology $H^{i}_{+}(G_{F,\Sigma},M)$ of $M$ by
\begin{equation*}
H^{i}_{+}(G_{F,\Sigma},M):=H^{i-1}(G_{F,\Sigma},M_{+}).
\end{equation*}
We first list the following facts which hold for an arbitrary number
field $F$.
\begin{pro}\label{Proposition properties of positive cohomology}
 We have the following properties:
\begin{enumerate}[label=(\roman*)]
    \item There is a long exact sequence
    \begin{equation*}
    \xymatrix@=1pc{\cdots\ar[r]&H^{i}_{+}(G_{F,\Sigma},M)\ar[r]&
    H^{i}(G_{F,\Sigma},M)\ar[r]&
    \displaystyle{\oplus_{w\mid\infty}}H^{i}(F_{w},M)\ar[r]&H^{i+1}_{+}(G_{F,\Sigma},M)\ar[r]&\cdots}.
    \end{equation*}
    \item For $i\not\in\{1,2\}$, we have
    $H^{i}_{+}(G_{F,\Sigma},M)=0$.
    \item If $F^{\prime}/F$ is an extension unramified outside
    $\Sigma$ with Galois group
    $G$ then there is a cohomological spectral sequence
    \begin{equation*}
    H^{p}(G,H^{q}_{+}(G_{F^{\prime},\Sigma},M))\Longrightarrow
    H^{p+q}_{+}(G_{F,\Sigma},M).
    \end{equation*}
\end{enumerate}
\end{pro}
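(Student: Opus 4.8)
The plan is to deduce all three assertions from the short exact sequence of $G_{F,\Sigma}$-modules
\begin{equation*}
0\longrightarrow M\longrightarrow\bigoplus_{w\mid\infty}\mathrm{Ind}^{G_{F}}_{G_{w}}M\longrightarrow M_{+}\longrightarrow 0
\end{equation*}
that defines $M_{+}$, combined with Shapiro's lemma, the Hochschild--Serre spectral sequence, and the known behaviour of $H^{\bullet}(G_{F,\Sigma},-)$ in high degrees. For (i) I would simply write down the long exact cohomology sequence of $G_{F,\Sigma}$ attached to the displayed sequence. Shapiro's lemma gives $H^{j}(G_{F,\Sigma},\mathrm{Ind}^{G_{F}}_{G_{w}}M)\cong H^{j}(F_{w},M)$, and the map induced by $M\hookrightarrow\bigoplus_{w\mid\infty}\mathrm{Ind}^{G_{F}}_{G_{w}}M$ becomes, under this identification, the sum of the localization maps $H^{j}(G_{F,\Sigma},M)\to H^{j}(F_{w},M)$; substituting this, together with the defining identity $H^{i}_{+}(G_{F,\Sigma},M)=H^{i-1}(G_{F,\Sigma},M_{+})$, turns the long exact sequence into exactly the one stated in (i). (Injectivity of the structural map $M\to\bigoplus_{w\mid\infty}\mathrm{Ind}^{G_{F}}_{G_{w}}M$ is clear, since any single component recovers $M$ upon evaluation at the identity.)

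For (ii) I would feed into (i) the classical fact that, for finite $M$, the localization map $\mathrm{res}_{i}\colon H^{i}(G_{F,\Sigma},M)\to\bigoplus_{w\mid\infty}H^{i}(F_{w},M)$ is bijective for $i\ge 3$ and surjective for $i=2$. (For the surjectivity at $i=2$ one can argue from the nine-term Poitou--Tate sequence, whose final segment identifies $H^{0}(G_{F,\Sigma},M^{\ast})^{\vee}$ with the cokernel of $H^{2}(G_{F,\Sigma},M)\to\bigoplus_{w\in\Sigma}H^{2}(F_{w},M)$; already the non-archimedean places of $\Sigma$ surject onto this cokernel, dually because a $G_{F,\Sigma}$-invariant of $M^{\ast}$ is detected in any single completion, so the archimedean component lies in $\mathrm{im}(\mathrm{res}_{2})$.) Now $H^{i}_{+}(G_{F,\Sigma},M)=H^{i-1}(G_{F,\Sigma},M_{+})$ vanishes for $i\le 0$ for degree reasons, and for $i\ge 3$ injectivity of $\mathrm{res}_{i}$ collapses the exact sequence of (i) to $H^{i}_{+}(G_{F,\Sigma},M)\cong\mathrm{coker}(\mathrm{res}_{i-1})$, which is $0$ because $\mathrm{res}_{i-1}$ is bijective when $i\ge 4$ and surjective when $i=3$. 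Thus $H^{i}_{+}(G_{F,\Sigma},M)=0$ for $i\notin\{1,2\}$.

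For (iii) I would apply the Hochschild--Serre spectral sequence for the normal subgroup $G_{F^{\prime},\Sigma}\trianglelefteq G_{F,\Sigma}$ (normal since $F^{\prime}/F$ is Galois and $F^{\prime}\subseteq F_{\Sigma}$), with quotient $G=\mathrm{Gal}(F^{\prime}/F)$, to the $G_{F,\Sigma}$-module $M_{+}$ computed relative to $F$:
\begin{equation*}
H^{p}\big(G,H^{q}(G_{F^{\prime},\Sigma},M_{+})\big)\Longrightarrow H^{p+q}(G_{F,\Sigma},M_{+}).
\end{equation*}
The key identification is that $M_{+}$ relative to $F$, restricted to $G_{F^{\prime},\Sigma}$, coincides with $M_{+}$ relative to $F^{\prime}$: by Mackey's formula $\mathrm{Res}^{G_{F}}_{G_{F^{\prime}}}\bigoplus_{w\mid\infty}\mathrm{Ind}^{G_{F}}_{G_{w}}M\cong\bigoplus_{w^{\prime}\mid\infty}\mathrm{Ind}^{G_{F^{\prime}}}_{G_{w^{\prime}}}M$, the sum running over the archimedean places $w^{\prime}$ of $F^{\prime}$, so the two defining short exact sequences are compatible with restriction. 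Hence $H^{q}(G_{F^{\prime},\Sigma},M_{+})\cong H^{q+1}_{+}(G_{F^{\prime},\Sigma},M)$ as $G$-modules, and reindexing $q\mapsto q-1$ while using $H^{\bullet}(G_{F,\Sigma},M_{+})=H^{\bullet+1}_{+}(G_{F,\Sigma},M)$ converts the spectral sequence into the one of (iii); this is legitimate because it only shifts the second grading, and hence the total degree, by one. (By (ii) this spectral sequence is concentrated in the two columns $q=1,2$, though that is not needed here.)

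The step I expect to be the real obstacle is the borderline case $i=3$ of (ii): the vanishing for $i\ge 4$ is immediate from ``cohomological dimension $2$ away from the real places'', whereas $i=3$ genuinely rests on the surjectivity of $\mathrm{res}_{2}$ onto the archimedean local cohomology --- a Poitou--Tate input --- and one must check that this is precisely the point where the $2$-primary contributions of the real places are absorbed into the functor $(\,\cdot\,)_{+}$.
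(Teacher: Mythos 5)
Your proof is correct, and since the paper does not reproduce Kahn's argument but simply cites \cite[\S 5]{Ka 93}, I can only compare against the expected route, which you follow. Part (i) is the long exact cohomology sequence of the defining short exact sequence for $M_{+}$ combined with Shapiro's lemma, which is indeed the content; part (iii) is the Hochschild--Serre spectral sequence for $G_{F',\Sigma}\trianglelefteq G_{F,\Sigma}$ applied to $M_{+}$, with the Mackey identification $\mathrm{Res}^{G_F}_{G_{F'}}M_{+}^{F}\cong M_{+}^{F'}$ and a degree shift, exactly as one would want. Your treatment of part (ii) is the delicate point and you handle it correctly: vanishing in degrees $i\le 0$ is immediate from the definition; vanishing for $i\ge 4$ follows from the classical isomorphism $H^{j}(G_{F,\Sigma},M)\xrightarrow{\sim}\bigoplus_{w\mid\infty}H^{j}(F_w,M)$ for $j\ge 3$; and the borderline case $i=3$ requires exactly what you isolate, namely that the archimedean projection of the localization map in degree $2$ is surjective. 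Your derivation of that surjectivity from the tail of the nine-term Poitou--Tate sequence, using that the non-archimedean part already surjects onto $H^0(G_{F,\Sigma},M^{\ast})^{\vee}$ (dual to the trivial injectivity of $(M^{\ast})^{G_{F,\Sigma}}\hookrightarrow (M^{\ast})^{G_v}$ for a single finite $v\in\Sigma$), is sound; note that the hypothesis that $\Sigma$ contains the $2$-adic places guarantees a finite place is available for this step. The one cosmetic point worth flagging is that the induction in the paper's notation should really be read as coinduction in the category of discrete $G_{F,\Sigma}$-modules (with $G_w$ identified with its image, which for archimedean $w$ is faithful), so that Shapiro's lemma applies as you invoke it; this is standard but worth keeping in mind.
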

\begin{proof}
See \cite[\S 5]{Ka 93}.
\end{proof}
The following corollary is a  direct consequence of $(\textit{ii})$
in Proposition \ref{Proposition properties of positive cohomology}
above.
\begin{coro}\label{corolary coresriction}
Let $F^{\prime}/F$ be a $\Sigma$-ramified extension with Galois
group $G$. Then the corestriction  map
\begin{equation*}
\xymatrix@=2pc{ H^{2}_{+}(G_{F^{\prime},\Sigma},M)_{G}\ar[r]&
H^{2}_{+}(G_{F,\Sigma},M)}
\end{equation*}
is an isomorphism.
\end{coro}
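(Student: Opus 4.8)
The plan is to derive this from part (ii) of Proposition \ref{Proposition properties of positive cohomology}, which asserts that $H^{i}_{+}(G_{F',\Sigma},M)=0$ for $i\notin\{1,2\}$, together with the Hochschild--Serre spectral sequence of part (iii). First I would write down the spectral sequence
\begin{equation*}
E_{2}^{p,q}=H^{p}(G,H^{q}_{+}(G_{F',\Sigma},M))\Longrightarrow H^{p+q}_{+}(G_{F,\Sigma},M),
\end{equation*}
and examine the terms contributing to the total degree $n=2$, namely $E_{2}^{0,2}$, $E_{2}^{1,1}$ and $E_{2}^{2,0}$. By part (ii) the coefficient module $H^{0}_{+}(G_{F',\Sigma},M)$ vanishes, so $E_{2}^{2,0}=0$; hence the only potentially nonzero contributions to $H^{2}_{+}(G_{F,\Sigma},M)$ are $E_{\infty}^{0,2}$ and $E_{\infty}^{1,1}$.

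Next I would chase the differentials. The differential out of $E_{2}^{0,2}$ is $d_{2}\colon E_{2}^{0,2}\to E_{2}^{2,1}=H^{2}(G,H^{1}_{+}(G_{F',\Sigma},M))$, and there is also $d_{3}\colon E_{3}^{0,2}\to E_{3}^{3,0}$; but $E_{2}^{3,0}=H^{3}(G,H^{0}_{+}(G_{F',\Sigma},M))=0$ again by part (ii), so $E_{3}^{0,2}=E_{2}^{0,2}$ survives modulo the image of incoming differentials — and since nothing maps into $E_{2}^{0,2}$ (it sits on the edge), we need the $d_{2}$ out of it to vanish or be accounted for. The cleaner route: observe that the vanishing $H^{q}_{+}=0$ for $q=0$ collapses the bottom row entirely, so the edge map $H^{2}_{+}(G_{F,\Sigma},M)\twoheadrightarrow E_{\infty}^{0,2}\subset H^{0}(G,H^{2}_{+}(G_{F',\Sigma},M))=H^{2}_{+}(G_{F',\Sigma},M)^{G}$ together with the five-term exact sequence in this degenerate situation identifies things; dualizing, the corestriction (which is the map induced on $H^{2}_{+}$ compatible with the edge morphism) factors through the coinvariants. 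I would then argue that the potential obstruction $E_{\infty}^{1,1}$, a subquotient of $H^{1}(G,H^{1}_{+}(G_{F',\Sigma},M))$, is killed: the outgoing differential $d_{2}\colon E_{2}^{1,1}\to E_{2}^{3,0}=0$ is zero, but the incoming $d_{2}\colon E_{2}^{-1,2}\to E_{2}^{1,1}$ is trivially zero, so $E_{\infty}^{1,1}=E_{2}^{1,1}$ in general — meaning the raw spectral sequence does \emph{not} by itself give an isomorphism. So the key realization is that one must instead compare with the analogous spectral sequence for ordinary cohomology and use that $H^{i}_{+}$ is concentrated in degrees $1,2$ for \emph{every} intermediate field, applying part (ii) to $G_{F',\Sigma}$, to $G_{F,\Sigma}$, and to sub-extensions, to force the relevant $H^{p}(G,H^{1}_{+})$ terms to cancel.

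Concretely, here is the argument I would actually commit to. Since $H^{q}_{+}(G_{F',\Sigma},M)=0$ for all $q\ge 3$ and $q=0$, the spectral sequence lives in the two rows $q=1,2$. For the total degree $n=2$ this yields a short exact sequence
\begin{equation*}
0\longrightarrow H^{1}(G,H^{1}_{+}(G_{F',\Sigma},M))\big/\operatorname{im}(d_{2})\longrightarrow H^{2}_{+}(G_{F,\Sigma},M)\longrightarrow H^{2}_{+}(G_{F',\Sigma},M)^{G}\longrightarrow 0
\end{equation*}
together with $d_{2}\colon H^{2}_{+}(G_{F',\Sigma},M)^{G}\to H^{2}(G,H^{1}_{+}(G_{F',\Sigma},M))$ continuing the low-degree exact sequence into degree $3$. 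To pass from invariants to coinvariants and from restriction to corestriction, I would dualize using the local and global duality built into the totally positive formalism (the perfect pairing \eqref{local dualty theorem} and its global counterpart behind Kahn's construction): corestriction on $H^{2}_{+}$ is dual to restriction on $H^{0}_{+}$ of the dual module, and $H^{0}_{+}$ vanishes by part (ii), which forces the dual of the obstruction term $H^{1}(G,H^{1}_{+})$ to vanish as well. Hence both the kernel and cokernel of $\mathrm{cor}\colon H^{2}_{+}(G_{F',\Sigma},M)_{G}\to H^{2}_{+}(G_{F,\Sigma},M)$ are zero, giving the isomorphism. The main obstacle is precisely this last point: showing the $E_{\infty}^{1,1}$-type contribution vanishes, which I expect requires invoking the duality/finiteness that makes $H^{2}_{+}$ the ``top'' cohomology (so that there is nothing above it to obstruct corestriction), rather than any purely formal spectral-sequence manipulation; once that is in hand the rest is bookkeeping with the long exact sequence of part (i) and the degree-vanishing of part (ii).
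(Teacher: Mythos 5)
The central claim of your write-up—that the corollary follows by combining the Hochschild--Serre spectral sequence with a duality pairing $H^{2}_{+}$ against $H^{0}_{+}$ of a dual module—has a genuine gap, and the spectral-sequence bookkeeping is also off.

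First, the bookkeeping. In the two-row situation you correctly reach, the extension is
\begin{equation*}
0\longrightarrow E^{1,1}_{\infty}\longrightarrow H^{2}_{+}(G_{F,\Sigma},M)\longrightarrow E^{0,2}_{\infty}\longrightarrow 0,
\end{equation*}
with $E^{1,1}_{\infty}=E^{1,1}_{2}=H^{1}(G,H^{1}_{+}(G_{F',\Sigma},M))$ untouched by any differential (the only relevant $d_{2}$ out of the $(1,1)$-spot lands in $E^{2,0}_{2}=0$, and nothing maps in), while $E^{0,2}_{\infty}=\ker\bigl(d_{2}\colon E^{0,2}_{2}\to E^{2,1}_{2}\bigr)$. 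The passage ``$H^{1}(G,H^{1}_{+})/\operatorname{im}(d_{2})$'' in your displayed sequence is therefore misplaced: $d_{2}$ does not hit the $(1,1)$-term. This is a minor slip, but it matters because you then lean on it.

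Second, and more seriously, the proposed rescue via duality does not work. You assert that ``corestriction on $H^{2}_{+}$ is dual to restriction on $H^{0}_{+}$ of the dual module, and $H^{0}_{+}$ vanishes, which forces the dual of the obstruction term $H^{1}(G,H^{1}_{+})$ to vanish.'' There is no Poitou--Tate-type pairing in Kahn's totally positive formalism that places $H^{2}_{+}(G_{F,\Sigma},M)$ in perfect duality with an $H^{0}_{+}$ group. Indeed, if such a pairing existed, then $H^{0}_{+}\equiv 0$ (Proposition \ref{Proposition properties of positive cohomology}(ii)) would immediately force $H^{2}_{+}\equiv 0$, which is false; compare the nontrivial role $H^{2}_{+}$ plays in Proposition \ref{exact sequence of finie places}, where it is related instead to $H^{1}(G_{F,\Sigma},M^{\ast})^{\vee}$ up to local correction terms. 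So the ``key realization'' you end on is precisely the step that fails, and nothing in the proposal actually kills the $E^{1,1}_{\infty}$ contribution.

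What the paper means by ``a direct consequence of $(\textit{ii})$'' is the standard \emph{corestriction-in-top-degree} principle, not a duality. Because $H^{q}_{+}(G_{F'',\Sigma},M)=0$ for $q\geq 3$ for \emph{every} intermediate field $F''$, the functor $N\mapsto H^{2}_{+}(G_{F,\Sigma},N\otimes M)$ on finite $\mathcal{O}[G]$-modules (with $G_{F,\Sigma}$ acting through $G$ on the first factor and diagonally on the tensor) is right exact. Evaluated on the free module $\mathbb{Z}[G]$ it gives $H^{2}_{+}(G_{F',\Sigma},M)$ by Shapiro's lemma, since $\mathbb{Z}[G]\otimes M\cong\mathrm{Ind}_{G_{F',\Sigma}}^{G_{F,\Sigma}}M$. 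A right exact additive functor agreeing with $N\mapsto N\otimes_{\mathbb{Z}[G]}H^{2}_{+}(G_{F',\Sigma},M)$ on free modules agrees on all modules; taking $N=\mathbb{Z}$ with the augmentation sequence $0\to I_{G}\to\mathbb{Z}[G]\to\mathbb{Z}\to0$ then gives $H^{2}_{+}(G_{F,\Sigma},M)\cong H^{2}_{+}(G_{F',\Sigma},M)_{G}$, with the map induced by $\mathbb{Z}[G]\to\mathbb{Z}$ identified with corestriction. This is exactly the dimension-shifting argument one uses to show $\mathrm{cor}\colon H^{n}(H,A)_{G/H}\xrightarrow{\sim}H^{n}(G,A)$ when $\mathrm{cd}\leq n$; vanishing above degree $2$ from part $(\textit{ii})$ supplies the hypothesis, and Shapiro's lemma for $H^{\ast}_{+}$ (available from Kahn's construction, like the spectral sequence of part $(\textit{iii})$) supplies the induced-module step. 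Your instinct that ``one must apply part (ii) to every intermediate field'' is correct; the missing idea is that this feeds into right exactness and Shapiro, not into a duality with $H^{0}_{+}$.
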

To go further, we need the following remark. If  $M_{\Sigma}$
denotes the cokernel of the canonical map $\xymatrix@=2pc{ M\ar[r]&
\oplus_{w\in \Sigma}\mathrm{Ind}^{G_{F}}_{G_{w}} M}$, then for all
$i\geq 0$, we have
\begin{equation}\label{cohomology of MS}
H^{i}(G_{F,\Sigma},M_{\Sigma})=H^{i+1}_{c}(G_{F,\Sigma},M),
\end{equation}
where $H^{i+1}_{c}(G_{F,\Sigma},.)$ is the continuous  cohomology
with compact support (for the definition see \cite[ \S
5.7.2]{Nekovar06}). Note that
\begin{equation}\label{Nekovar duality}
H^{i}_{c}(G_{F,\Sigma},M)\cong
H^{3-i}(G_{F,\Sigma},M^{\ast})^{\vee},
\end{equation}
\cite[Proposition 5.7.4]{Nekovar06}, where
$M^{\ast}=\mathrm{Hom}_{\mathbb{Z}_{2}}(M,\mu_{2^{\infty}})$.
\begin{pro}\label{exact sequence of finie places}
Let $\Sigma_{f}$ denote the set of finite places in $\Sigma$. Then
there is a long exact sequence
\begin{equation}
\xymatrix@=1pc{
\displaystyle{\oplus_{w\in\Sigma_{f}}}H^{1}(F_{w},M)\ar[r]&
H^{1}(G_{F,\Sigma},M^{\ast})^{\vee}\ar[r]&
H^{2}_{+}(G_{F,\Sigma},M)\ar[r]&\displaystyle{\oplus_{w\in\Sigma_{f}}}H^{2}(F_{w},M)\ar@{->>}
[r]& H^{0}(G_{F,\Sigma},M^{\ast})^{\vee}}.
\end{equation}
\end{pro}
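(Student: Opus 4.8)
The plan is to obtain the displayed sequence as a segment of the long exact cohomology sequence of $G_{F,\Sigma}$ attached to a short exact sequence of coefficient modules interpolating between $M_{\Sigma}$, $M_{+}$ and the induced modules at the finite places of $\Sigma$. First I would set $A:=\bigoplus_{w\mid\infty}\mathrm{Ind}^{G_{F}}_{G_{w}}M$ and $B:=\bigoplus_{w\in\Sigma_{f}}\mathrm{Ind}^{G_{F}}_{G_{w}}M$, so that $\bigoplus_{w\in\Sigma}\mathrm{Ind}^{G_{F}}_{G_{w}}M=A\oplus B$, with $M_{+}=\mathrm{coker}(M\hookrightarrow A)$ and $M_{\Sigma}=\mathrm{coker}(M\hookrightarrow A\oplus B)$, the maps being the diagonal localization maps. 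Since $M\hookrightarrow A$ is injective, the submodule $0\oplus B$ of $A\oplus B$ meets the image of $M$ trivially, hence embeds into $M_{\Sigma}$; dividing $A\oplus B$ first by $0\oplus B$ and then by the image of $M$ identifies $M_{\Sigma}/B$ with $A/M=M_{+}$. This produces the short exact sequence of $\mathcal{O}[G_{F,\Sigma}]$-modules
\[
0\longrightarrow \bigoplus_{w\in\Sigma_{f}}\mathrm{Ind}^{G_{F}}_{G_{w}}M\longrightarrow M_{\Sigma}\longrightarrow M_{+}\longrightarrow 0 .
\]
The orientation here is the crucial point: $B$ lies \emph{inside} $M_{\Sigma}$ with quotient $M_{+}$, not the reverse.

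I would then apply $H^{*}(G_{F,\Sigma},-)$ and rewrite the three families of terms. Shapiro's lemma gives $H^{i}(G_{F,\Sigma},\bigoplus_{w\in\Sigma_{f}}\mathrm{Ind}^{G_{F}}_{G_{w}}M)=\bigoplus_{w\in\Sigma_{f}}H^{i}(F_{w},M)$; the definition of totally positive cohomology gives $H^{i}(G_{F,\Sigma},M_{+})=H^{i+1}_{+}(G_{F,\Sigma},M)$; and $(\ref{cohomology of MS})$ combined with the duality $(\ref{Nekovar duality})$ gives $H^{i}(G_{F,\Sigma},M_{\Sigma})=H^{i+1}_{c}(G_{F,\Sigma},M)\cong H^{2-i}(G_{F,\Sigma},M^{\ast})^{\vee}$ (here $M$ is finite, so $M^{\ast\ast}=M$ and all Pontryagin duals behave well). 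Reading the long exact sequence in cohomological degrees $1$ and $2$ then yields
\[
\bigoplus_{w\in\Sigma_{f}}H^{1}(F_{w},M)\to H^{1}(G_{F,\Sigma},M^{\ast})^{\vee}\to H^{2}_{+}(G_{F,\Sigma},M)\to \bigoplus_{w\in\Sigma_{f}}H^{2}(F_{w},M)\to H^{0}(G_{F,\Sigma},M^{\ast})^{\vee}\to H^{3}_{+}(G_{F,\Sigma},M),
\]
and the last term vanishes by part $(\textit{ii})$ of Proposition \ref{Proposition properties of positive cohomology} (alternatively, $H^{3}(F_{w},M)=0$ since each $F_{w}$ with $w\in\Sigma_{f}$ is a non-archimedean local field), so the arrow onto $H^{0}(G_{F,\Sigma},M^{\ast})^{\vee}$ is surjective. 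This is exactly the asserted sequence.

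The only genuinely delicate point — and where I expect the main friction — is pinning down the short exact sequence with the correct orientation and checking that the three rewritings above are functorial enough in $M$ to be compatible with the connecting homomorphisms; since $(\ref{cohomology of MS})$ comes from a natural distinguished triangle and $(\ref{Nekovar duality})$ is natural, this is routine, and no special behaviour of the prime $2$ enters the argument.
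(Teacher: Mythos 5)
Your argument is correct and follows essentially the same route as the paper: you construct the short exact sequence $0\to\bigoplus_{w\in\Sigma_{f}}\mathrm{Ind}^{G_{F}}_{G_{w}}M\to M_{\Sigma}\to M_{+}\to 0$ (the paper gets it via the snake lemma applied to the two defining rows for $M_{\Sigma}$ and $M_{+}$; your direct quotient argument is an equivalent derivation), then take the long exact $G_{F,\Sigma}$-cohomology sequence and rewrite the three columns using Shapiro, the shift $H^{i}(G_{F,\Sigma},M_{+})=H^{i+1}_{+}(G_{F,\Sigma},M)$, and the identities $(\ref{cohomology of MS})$, $(\ref{Nekovar duality})$, with the final surjectivity coming from $H^{3}_{+}(G_{F,\Sigma},M)=0$. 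One tiny caveat: your parenthetical alternative for the surjectivity ($H^{3}(F_{w},M)=0$ for $w\in\Sigma_{f}$) concerns the \emph{next} term $\bigoplus_{w\in\Sigma_{f}}H^{3}(F_{w},M)$ in the long exact sequence, not $H^{2}(G_{F,\Sigma},M_{+})=H^{3}_{+}(G_{F,\Sigma},M)$, so on its own it does not establish the claimed surjection; the appeal to Proposition~\ref{Proposition properties of positive cohomology}$(\textit{ii})$ is the argument that actually closes the sequence.
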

\begin{proof}
Consider the commutative exact diagram
\begin{equation*}
\xymatrix@=1.5pc{ 0\ar[r]&M\ar@{=}[d]\ar[r]&
\oplus_{w\in\Sigma}\mathrm{Ind}_{G_{w}}^{G_{F}}M\ar[r]\ar@{->>}[d]&
M_{\Sigma}\ar[d]\ar[r]&0\\
0\ar[r]&M\ar[r]&
\oplus_{w\mid\infty}\mathrm{Ind}_{G_{w}}^{G_{F}}M\ar[r]&M_{+}\ar[r]&0}.
\end{equation*}
Using the snake lemma we obtain the exact sequence
\begin{equation}\label{equation of exact sequence }
\xymatrix@=2pc{
0\ar[r]&\displaystyle{\oplus_{w\in\Sigma_{f}}\mathrm{Ind}^{G_{K}}_{G_{w}}}M\ar[r]&M_{\Sigma}\ar[r]
&M_{+}\ar[r]&0}.
\end{equation}
Taking the $G_{F,\Sigma}$-cohomology of the exact sequence
$(\ref{equation of exact sequence })$ and since
$H^{i}_{+}(G_{F,\Sigma},M)=0$ for $i\not\in\{1,2\}$ (see Proposition
\ref{Proposition properties of positive cohomology}), we get  the
exact sequence
\begin{equation*}
 \xymatrix@=1pc{
\displaystyle{\oplus_{w\in\Sigma_{f}}}H^{1}(K_{w},M)\ar[r]&
H^{1}(G_{K,\Sigma},M_{\Sigma})\ar[r]&
H^{2}_{+}(G_{F,\Sigma},M)\ar[r]&\displaystyle{\bigoplus_{w\in\Sigma_{f}}}H^{2}(K_{w},M)\ar@{->>}
[r]& H^{2}(G_{F,\Sigma},M_{\Sigma})}.
\end{equation*}
To obtain the desired result, it suffices to observe that
 \begin{equation*}
 H^{1}(G_{F,\Sigma},M_{\Sigma})=H^{1}(G_{F,\Sigma},M^{\ast})^{\vee}\quad\mbox{and}\quad
 H^{2}(G_{F,\Sigma},M_{\Sigma})=H^{0}(G_{F,\Sigma},M^{\ast})^{\vee};
 \end{equation*}
 this is a consequence of the  properties $(\ref{cohomology of MS})$ and $(\ref{Nekovar
duality})$.
\end{proof}
\subsection{Iwasawa theory}\label{section Iwasawa theory} Throughout this
subsection we fix a totally real number field $K$. Let
$r=[K:\mathbb{Q}]$ and $K_{\infty}=\bigcup_{n\geq 0}K_{n}$ denote
the cyclotomic $\mathbb{Z}_{2}$-extension of $K$. Assume that all
algebraic extensions of $K$ are contained in a fixed  algebraic
closure $\overline{\mathbb{Q}}$ of $\mathbb{Q}$. If $F$ is a finite
extension of $K$ and $w$ is a place of $F$, fix a place
$\overline{w}$ of $\overline{\mathbb{Q}}$ lying above $w$. The
decomposition (resp. inertia) group of $\overline{w}$ in
$\overline{\mathbb{Q}}/F$ is denoted  by $G_{w}$ (resp. $I_{w}$). If
$v$ is a place of $K$ and $F$ is a Galois extension of $K$, we
denote the decomposition group of $v$ in $F/K$ by $D_{v}(F/K)$.
Recall that
\begin{equation*}
\xymatrix@=2pc{ \chi : G_{K}\ar[r]& \mathcal{O}^{\times}}
\end{equation*}
is a non-trivial totally even  character, factoring through a finite
abelian  extension $L$ of $K$. Assume that $L$ and $K_{\infty}$ are
linearly disjoint over $K$. Let $L_{n}=LK_{n}$ and let
$L_{\infty}=LK_{\infty}$ be the cyclotomic
$\mathbb{Z}_{2}$-extension of $L$. In the sequel, we will denote by
$T$ the $2$-adic representation
\begin{equation*}
T=\mathbb{Z}_{2}(1)\otimes \mathcal{O}(\chi^{-1}).
\end{equation*}
Let $\Sigma$ be a finite set of places of $K$ containing all
infinite places, all $2$-adic places and all places where $T$ is
ramified. If $F$ is an extension of $K$, we denote also by $\Sigma$
the set of places of $F$ lying above places in $\Sigma$.\vskip 6pt
 Let's recall the definition of the canonical Selmer
structure $\mathcal{F}_{can}$ on $T$;
\begin{itemize}
    \item $\Sigma(\mathcal{F}_{can})=\Sigma$.
    \item $H^{1}_{\mathcal{F}_{can}}(F_{w},T)=\left\{
                                                \begin{array}{ll}
                                                  H^{1}_{\mathcal{F}_{ur}}(F_{w},T), & \hbox{if $w\nmid 2\infty$;} \\
                                                  H^{1}(F_{w},T), & \hbox{if $w\mid\infty$.}
                                                \end{array}
                                              \right.
                                              $\quad and
                                              $H^{1}_{\mathcal{F}_{can}}(F_{2},T)=\displaystyle{\oplus_{w\mid
                                              2}}H^{1}(F_{w},T)$,
\end{itemize}
where $\mathcal{F}_{ur}$ is the unramified local condition, see
Example \ref{Example unramified local condition}.
  Let
\begin{equation*}
H^{1}_{\mathcal{F}_{can}}(FK_{\infty},T):=\varprojlim_{n}H^{1}_{\mathcal{F}_{can}}(FK_{n},T),\quad
H^{1}_{\mathcal{F}_{can}^{\ast}}(FK_{\infty},T^{\ast}):=\varinjlim_{n}H^{1}_{\mathcal{F}_{can}^{\ast}}(FK_{n},T^{\ast}),
\end{equation*}
 where the projective (resp. injective) limit is taken with respect to
 the corestriction (resp. restriction) maps. For an $\mathcal{O}$-module $M$, we
 denote by
$M^{\vee}:=\mathrm{Hom}_{\mathbb{Z}_{2}}(M,\mathbb{Q}_{2}/\mathbb{Z}_{2})$
its Pontryagin dual.\\
 Note that the Kolyvagin system  (see \cite{Rubin00, MR04})
 machinery permits to obtain bounds on the associated Selmer groups.
 More precisely, the Kolyvagin-Rubin approach shows (see
 \cite[Theorem 2.3.3]{Rubin00}) that if a non-trivial Euler system exists,
 then the index of the Euler system in $H^{1}_{\mathcal{F}_{can}}(K_{\infty},T)$ gives a bound for
 $H^{1}_{\mathcal{F}_{can}^{\ast}}(K_{\infty},T^{\ast})^{\vee}$.
  It is  well know that the Rubin-Stark elements give rise to Euler systems for the $2$-adic
  representation $T=\mathbb{Z}_{2}(1)\otimes \mathcal{O}(\chi^{-1})$
  ($\mathrm{e.g.}$\cite{Rubin96}). To find a bound for the narrow
  class group, we need to modify the canonical Selmer structure,
  $\mathrm{cf}$. Proposition \ref{chi quotient of restrincted class
  group}.
\begin{deft}
Let $F$ be a finite extension of $K$, and let $\mathcal{F}$ be a
Selmer structure on $T$. We define the positive  Selmer structure
$\mathcal{F}^{+}$ by
\begin{itemize}
    \item $\Sigma(\mathcal{F}^{+})=\Sigma(\mathcal{F})$
    \item $H^{1}_{\mathcal{F}^{+}}(F_{w},T)=\left\{
                                                    \begin{array}{ll}
                                                      H^{1}_{\mathcal{F}}(F_{w},T), & \hbox{if $w\nmid \infty$;} \\
                                                      0, & \hbox{ if $w\mid \infty$.}
                                                    \end{array}
                                                  \right.
                                                  $
\end{itemize}
\end{deft}
The following lemma is a first step towards  our purpose.
\begin{lem}\label{Lemma narrow class}
Let $F$ be a finite Galois extension of $L$, and let $Cl^{+}_{F}$ be
the narrow class group of $F$. Then
\begin{equation*}
H^{1}_{\mathcal{F}_{ur}^{+,\ast}}(F,T^{\ast})\cong
\mathrm{Hom}(Cl^{+}_{F},T^{\ast}).
\end{equation*}
\end{lem}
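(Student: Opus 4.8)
The plan is to identify $H^{1}_{\mathcal{F}_{ur}^{+,\ast}}(F,T^{\ast})$ with $\mathrm{Hom}(\mathrm{Gal}(H_F^{+}/F),T^{\ast})$, where $H_F^{+}$ denotes the narrow Hilbert class field of $F$ (the maximal abelian extension of $F$ unramified at every finite place), and then to invoke global class field theory. The first step is to determine the $G_F$-module structure of $T^{\ast}$. Since $T^{\ast}=\mathrm{Hom}_{\mathcal{O}}(\mathbb{Z}_{2}(1)\otimes\mathcal{O}(\chi^{-1}),D(1))$ the two Tate twists cancel, so $T^{\ast}\cong D\otimes\mathcal{O}(\chi)$; and because $F\supseteq L=L_{\chi}$ the character $\chi$ is trivial on $G_F$, hence $G_F$ acts trivially on $T^{\ast}$. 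Therefore $H^{1}(G_{F,\Sigma},T^{\ast})$ is the group of continuous homomorphisms $G_{F,\Sigma}\to T^{\ast}$, and for every finite place $w$ of $F$ one has $H^{1}_{ur}(F_{w},T^{\ast})=H^{1}(G_{F_{w}}/I_{w},T^{\ast})\cong T^{\ast}$, which is divisible; in particular $H^{1}_{ur}(F_{w},T^{\ast})_{div}=H^{1}_{ur}(F_{w},T^{\ast})$ at every finite place.

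Next I would unwind the local conditions defining $\mathcal{F}_{ur}^{+,\ast}$. By construction $\mathcal{F}_{ur}^{+}$ coincides with $\mathcal{F}_{ur}$ away from the archimedean places and imposes the trivial condition $0$ at each real place; dualizing, $\mathcal{F}_{ur}^{+,\ast}$ coincides with $\mathcal{F}_{ur}^{\ast}$ away from the archimedean places and imposes the full condition at each real place (the annihilator of $0$ being everything). Combining the description of $\mathcal{F}_{ur}^{\ast}$ in Example~\ref{Example unramified local condition}(i) with the divisibility noted above, one finds
\begin{equation*}
H^{1}_{\mathcal{F}_{ur}^{+,\ast}}(F_{w},T^{\ast})=H^{1}_{ur}(F_{w},T^{\ast})\ \ \text{ for every finite place }w,\qquad H^{1}_{\mathcal{F}_{ur}^{+,\ast}}(F_{w},T^{\ast})=H^{1}(F_{w},T^{\ast})\ \ \text{ for }w\mid\infty .
\end{equation*}
Thus at every finite place (including the $2$-adic ones) the condition is the honest unramified subgroup, while the archimedean places impose no constraint at all.

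Consequently a homomorphism $\phi\in H^{1}(G_{F,\Sigma},T^{\ast})$ belongs to $H^{1}_{\mathcal{F}_{ur}^{+,\ast}}(F,T^{\ast})$ if and only if $\phi$ is unramified at every finite place of $F$: it is automatically unramified outside $\Sigma$, and no condition is imposed at the real places. For such $\phi$ the fixed field of $\ker\phi$ is an abelian extension of $F$ unramified at all finite places, hence contained in $H_F^{+}$ (and $H_F^{+}\subseteq F_{\Sigma}$), so $\phi$ factors through $\mathrm{Gal}(H_F^{+}/F)$; conversely every homomorphism $\mathrm{Gal}(H_F^{+}/F)\to T^{\ast}$ inflates to such a $\phi$. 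This yields a natural isomorphism $H^{1}_{\mathcal{F}_{ur}^{+,\ast}}(F,T^{\ast})\cong\mathrm{Hom}(\mathrm{Gal}(H_F^{+}/F),T^{\ast})$, and since global class field theory gives $\mathrm{Gal}(H_F^{+}/F)\cong Cl^{+}_{F}$, the lemma follows. This is the narrow analogue of the last assertion of Example~\ref{Example unramified local condition}, the $\chi$-twist being invisible over $F$.

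I expect the two points requiring genuine care to be local. At the $2$-adic places the dual local condition is a priori only the maximal divisible submodule of the semi-local unramified subgroup; it is precisely the triviality of the $G_F$-action on $T^{\ast}$ that forces this subgroup to be divisible already, so that the condition at $2$ reduces to the usual unramified one, and hypothesis $(\mathcal{H}_{2})$ is not needed for this particular lemma. The other point — and the reason the \emph{narrow} rather than the ordinary class group occurs — is that for $p=2$ the group $H^{1}(F_{w},T^{\ast})$ at a real place $w$ need not vanish, yet the dual of the zero condition at $w$ is the whole of $H^{1}(F_{w},T^{\ast})$, so these places contribute nothing to the Selmer group.
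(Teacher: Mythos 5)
Your proof is correct and follows essentially the same route as the paper's: both observe that the trivial $G_F$-action on $T^{\ast}$ (since $\chi$ factors through $L\subseteq F$) makes $H^{1}_{ur}(F_w,T^{\ast})$ divisible at every finite place, so the dual local condition there reduces to the honest unramified one while the archimedean places impose no constraint, and both then invoke global class field theory to identify the resulting kernel with $\mathrm{Hom}(\mathrm{Gal}(H_F^{+}/F),T^{\ast})\cong\mathrm{Hom}(Cl_F^{+},T^{\ast})$. Your write-up is merely more explicit than the paper's about the identification $T^{\ast}\cong D\otimes\mathcal{O}(\chi)$ and the role of the narrow Hilbert class field.
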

\begin{proof}
Let $w$ be a finite place of $F$. Since $\chi$ is a character
factoring through $L$, the decomposition group $G_{w}$ acts
trivially on $T^{\ast}$. Then
\begin{equation*}
H^{1}_{ur}(F_{w},T^{\ast})\cong \mathrm{Hom}(G_{w}/I_{w},T^{\ast}).
\end{equation*}
Moreover $G_{w}/I_{w}$ is torsion-free and $T^{\ast}$ is divisible,
then $\mathrm{Hom}(G_{w}/I_{w},T^{\ast})$ is divisible, therefore
\begin{equation*}
H^{1}_{\mathcal{F}_{ur}^{\ast}}(F_{w},T^{\ast})=H^{1}_{ur}(F_{w},T^{\ast})
\end{equation*}
by $(\ref{unramified properties})$. In particular
$H^{1}(F_{w},T^{\ast})/H^{1}_{\mathcal{F}_{ur}^{\ast}}(F_{w},T^{\ast})$
injects into $\mathrm{Hom}(I_{w},T^{\ast})$. Hence
\begin{eqnarray*}
  H^{1}_{\mathcal{F}_{ur}^{+,\ast}}(F,T^{\ast}) &=& \ker(\xymatrix@=1pc{ H^{1}(G_{F,\Sigma},T^{\ast})\ar[r]&
  \bigoplus_{w\in\Sigma}H^{1}(F_{w},T^{\ast})/H^{1}_{\mathcal{F}_{ur}^{+,\ast}}(F_{w},T^{\ast})})\\
   &=&\ker(\xymatrix@=1pc{\mathrm{Hom}(G_{F,\Sigma},T^{\ast})\ar[r]&\bigoplus_{w\in\Sigma_{f}}\mathrm{Hom}(I_{w},T^{\ast})
   }).
\end{eqnarray*}
Using class field theory we obtain the result.
\end{proof}
 Let
\begin{equation*}
H^{1}_{\mathcal{F}_{can}^{+}}(FK_{\infty},T):=\varprojlim_{n}H^{1}_{\mathcal{F}_{can}^{+}}(FK_{n},T),\quad
H^{1}_{\mathcal{F}_{can}^{+,\ast}}(FK_{\infty},T^{\ast}):=\varinjlim_{n}H^{1}_{\mathcal{F}_{can}^{+,\ast}}(FK_{n},T^{\ast}),
\end{equation*}
 where the projective (resp. injective) limit is taken with respect to
 the corestriction (resp. restriction) maps.\vskip 6pt
We now want to study the relation between
$H^{1}_{\mathcal{F}_{can}^{+,\ast}}(K_{\infty},T^{\ast})$ and
$H^{1}_{\mathcal{F}_{can}^{+,\ast}}(L_{\infty},T^{\ast})$.
\begin{deft}\label{definition of Sigma Selmer structure}
We define the Selmer structures $\mathcal{F}_{\Sigma}$ on $T$ by
\begin{itemize}
    \item $\Sigma(\mathcal{F}_{\Sigma})=\Sigma$
     \item $H^{1}_{\mathcal{F}_{\Sigma}}(F_{w},T)= H^{1}(F_{w},T)$
     if $w\in \Sigma$.
\end{itemize}
\end{deft}
 Let
\begin{equation*}
H^{1}_{\mathcal{F}_{\Sigma}^{+}}(FK_{\infty},T):=\varprojlim_{n}H^{1}_{\mathcal{F}_{\Sigma}^{+}}(FK_{n},T),\quad
H^{1}_{\mathcal{F}_{\Sigma}^{+,\ast}}(FK_{\infty},T^{\ast}):=\varinjlim_{n}H^{1}_{\mathcal{F}_{\Sigma}^{+,\ast}}(FK_{n},T^{\ast}),
\end{equation*}
 where the projective (resp. injective) limit is taken with respect to
 the corestriction (resp. restriction) maps.
\begin{lem} Let $\mathcal{G}$ denote the Galois group $\mathrm{Gal}(L_{\infty}/K)$. Then the
 $\mathcal{O}[[\mathcal{G}]]$-modules
$H^{1}_{\mathcal{F}_{can}^{+,\ast}}(L_{\infty},T^{\ast})$ and
$H^{1}_{\mathcal{F}_{\Sigma}^{+,\ast}}(L_{\infty},T^{\ast})$ are
isomorphic.
\end{lem}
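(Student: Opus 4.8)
The plan is to compare $\mathcal{F}_{can}^{+}$ and $\mathcal{F}_{\Sigma}^{+}$ place by place on $T$, transport the comparison to $T^{\ast}$, and show that the discrepancy is concentrated in local terms that become trivial in the injective limit along $L_{\infty}/L$.

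First I would record that $\mathcal{F}_{can}^{+}\leq\mathcal{F}_{\Sigma}^{+}$ on $T$: both structures have $\Sigma(\mathcal{F})=\Sigma$, they coincide at the infinite and the $2$-adic places, and at every other place $w\in\Sigma$ one has $H^{1}_{\mathcal{F}_{ur}}(F_{w},T)\subseteq H^{1}(F_{w},T)=H^{1}_{\mathcal{F}_{\Sigma}}(F_{w},T)$. Dualizing reverses the ordering, so $\mathcal{F}_{\Sigma}^{+,\ast}\leq\mathcal{F}_{can}^{+,\ast}$ on $T^{\ast}$, and the dual local conditions agree everywhere except at the finite $w\in\Sigma$ with $w\nmid 2$; there $H^{1}_{\mathcal{F}_{\Sigma}^{+,\ast}}(F_{w},T^{\ast})=0$, while $H^{1}_{\mathcal{F}_{can}^{+,\ast}}(F_{w},T^{\ast})=H^{1}_{\mathcal{F}_{ur}^{\ast}}(F_{w},T^{\ast})=H^{1}_{ur}(F_{w},T^{\ast})_{div}$ by $(\ref{unramified properties})$. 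Feeding the inclusion $\mathcal{F}_{\Sigma}^{+,\ast}\leq\mathcal{F}_{can}^{+,\ast}$ of Selmer structures on $T^{\ast}$ into $(\ref{exacte sequence of selmer structures})$ (using $T^{\ast\ast}=T$) then yields, functorially in $n$, a left-exact sequence
\[
0\longrightarrow H^{1}_{\mathcal{F}_{\Sigma}^{+,\ast}}(L_{n},T^{\ast})\longrightarrow H^{1}_{\mathcal{F}_{can}^{+,\ast}}(L_{n},T^{\ast})\longrightarrow\bigoplus_{w\in\Sigma_{f}(L_{n}),\,w\nmid 2}H^{1}_{ur}(L_{n,w},T^{\ast})_{div},
\]
compatible with the restriction maps; so it suffices to prove that the injective limit over $n$ of the last term vanishes.

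For the local analysis I would use that $\chi$ factors through $L$: thus $G_{L_{n}}$ acts trivially on $T^{\ast}$, which, restricted to any finite place $w\nmid 2$ of $L_{n}$, is the unramified trivial module $D=\Phi/\mathcal{O}$; hence $H^{1}_{ur}(L_{n,w},T^{\ast})=H^{1}(\mathrm{Gal}(L_{n,w}^{ur}/L_{n,w}),D)\cong D$ via evaluation at Frobenius, which is divisible, so $H^{1}_{ur}(L_{n,w},T^{\ast})_{div}=D$, and for $w'\mid w$ in $L_{n+1}/L_{n}$ the restriction map $D\to D$ is multiplication by the residue degree $f(w'/w)$. Now fix a place $v\nmid 2$ of $K$ lying in $\Sigma$: its Frobenius in $\mathrm{Gal}(K_{\infty}/K)\cong\mathbb{Z}_{2}$ is non-trivial (its image in $\mathrm{Gal}(\mathbb{Q}_{\infty}/\mathbb{Q})$, with $\mathbb{Q}_{\infty}$ the cyclotomic $\mathbb{Z}_{2}$-extension of $\mathbb{Q}$, is a non-zero power of the Frobenius of the rational prime below $v$, and that group is torsion-free), hence of infinite order, so $v$ is finitely decomposed in $L_{\infty}/K$. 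Therefore for $n$ large the set of places of $L_{n}$ above $v$ is in bijection with that of $L_{n+1}$ and each such place stays inert, so $f(w'/w)=2$; since $D$ is a torsion $\mathcal{O}$-module every element is killed by a power of $2$, and the colimit of $\bigoplus_{w\mid v}D\xrightarrow{\,\times 2\,}\bigoplus_{w\mid v}D\xrightarrow{\,\times 2\,}\cdots$ vanishes. As $\Sigma_{f}$ is finite, this gives $\varinjlim_{n}\bigoplus_{w}H^{1}_{ur}(L_{n,w},T^{\ast})_{div}=0$.

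Passing to the injective limit in the sequence above then produces an isomorphism $H^{1}_{\mathcal{F}_{\Sigma}^{+,\ast}}(L_{\infty},T^{\ast})\xrightarrow{\ \sim\ }H^{1}_{\mathcal{F}_{can}^{+,\ast}}(L_{\infty},T^{\ast})$, and since every map in the construction is equivariant for the natural action of $\mathcal{G}=\mathrm{Gal}(L_{\infty}/K)$ on cohomology (permuting the places above each place of $K$), it is an isomorphism of $\mathcal{O}[[\mathcal{G}]]$-modules. I expect the only substantive step to be the vanishing of the local colimit: it rests on the fact that a prime not above $2$ is never totally split in the cyclotomic $\mathbb{Z}_{2}$-extension, together with $\Phi/\mathcal{O}$ being torsion, which makes the multiplication-by-residue-degree transition maps eventually annihilate every class; the remainder is formal manipulation of Selmer structures and the sequence $(\ref{exacte sequence of selmer structures})$.
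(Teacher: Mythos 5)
Your proof is correct and follows the same route as the paper's: compare the two Selmer structures on $T^{\ast}$, feed $\mathcal{F}_{\Sigma}^{+,\ast}\leq\mathcal{F}_{can}^{+,\ast}$ into the five-term exact sequence $(\ref{exacte sequence of selmer structures})$, and pass to the direct limit over $n$. The only difference is that where the paper defers the vanishing of the colimit of the local terms to the proof of \cite[Proposition 3.5]{AMO1}, you supply that argument explicitly (a non-$2$-adic prime is finitely decomposed in the cyclotomic $\mathbb{Z}_{2}$-tower, so the restriction maps on $H^{1}_{ur}(L_{n,w},T^{\ast})_{div}\cong\Phi/\mathcal{O}$ are eventually multiplication by the residue degree $2$, which annihilates the torsion module in the colimit); this is exactly the mechanism the cited proposition invokes.
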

\begin{proof} Let  $\Sigma_{2}$ denote the set of $2$-adic places.
Observe that $\mathcal{F}_{\Sigma}^{+,\ast}\leq
\mathcal{F}_{can}^{+,\ast}$, then by $(\ref{exacte sequence of
selmer structures})$ we have an exact sequence
\begin{equation*}
\xymatrix@=2pc{0\ar[r]&
H^{1}_{\mathcal{F}_{\Sigma}^{+,\ast}}(L_{n},T^{\ast})\ar[r]&
 H^{1}_{\mathcal{F}_{can}^{+,\ast}}(L_{n},T^{\ast})\ar[r]&
\displaystyle{\oplus_{w\in\Sigma_{f}-\Sigma_{2}}}H^{1}_{\mathcal{F}_{ur}^{\ast}}(L_{n,w},T^{\ast})}.
\end{equation*}
Passing to direct limit over $n$, the result follows from the proof
of \cite[Proposition 3.5]{AMO1}.
\end{proof}
The following proposition is crucial for our purpose.
 \begin{pro}\label{Proposition pseudo isomorph}
 The $\Lambda$-modules
 $H^{1}_{\mathcal{F}_{can}^{+,\ast}}(K_{\infty},T^{\ast})^{\vee}$
 and $(H^{1}_{\mathcal{F}_{can}^{+,\ast}}(L_{\infty},T^{\ast})^{\vee})_{\mathrm{Gal}(L_{\infty}/K_{\infty})}$
 are pseudo-isomorphic.
 \end{pro}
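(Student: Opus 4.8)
The plan is to run the usual Galois descent for Selmer groups along the finite extension $L_\infty/K_\infty$, in the spirit of \cite[Proposition 3.5]{AMO1}, and then to observe that every error term that appears is finite, hence pseudo-null over $\Lambda$ (recall $\Lambda$ is a two-dimensional regular local ring, so a finitely generated $\Lambda$-module is pseudo-null precisely when it is finite). Write $\Delta:=\mathrm{Gal}(L_\infty/K_\infty)$, canonically identified with $\mathrm{Gal}(L/K)$ by linear disjointness. Using Pontryagin duality and the identification $(M^{\vee})_{\Delta}=(M^{\Delta})^{\vee}$ for a discrete $\Delta$-module $M$, I would first reduce the statement to showing that the restriction map
\begin{equation*}
\mathrm{res}\colon\ H^1_{\mathcal{F}_{can}^{+,\ast}}(K_\infty,T^\ast)\longrightarrow H^1_{\mathcal{F}_{can}^{+,\ast}}(L_\infty,T^\ast)^{\Delta}
\end{equation*}
has finite kernel and cokernel; since the relevant $\Lambda$-modules are finitely generated and torsion (the latter by $(\mathcal{H}_{3})$, the former because the duals are finitely generated over $\mathcal{O}[[\mathrm{Gal}(L_\infty/K)]]$, which is finite over $\Lambda$), such a map is automatically a pseudo-isomorphism after dualizing.

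The key structural input is that $\chi$ factors through $L\subseteq L_\infty$, so that $G_{L_\infty}$ acts trivially on $T^\ast=D(\chi)$ and $(T^\ast)^{G_{L_\infty,\Sigma}}=T^\ast$. Hence the inflation--restriction sequence for $G_{L_\infty,\Sigma}\trianglelefteq G_{K_\infty,\Sigma}$, with quotient $\Delta$, reads
\begin{equation*}
0\to H^1(\Delta,T^\ast)\to H^1(G_{K_\infty,\Sigma},T^\ast)\xrightarrow{\ \mathrm{res}\ }H^1(G_{L_\infty,\Sigma},T^\ast)^{\Delta}\to H^2(\Delta,T^\ast),
\end{equation*}
and since $\chi$ is a non-trivial character of the finite group $\Delta$, the $\mathcal{O}$-module $T^\ast$ has finite $\Delta$-invariants and all the groups $H^i(\Delta,T^\ast)$ are finite. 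Next I would compare, place by place in $\Sigma$, the local quotients $C_v:=H^1(K_{\infty,v},T^\ast)/H^1_{\mathcal{F}_{can}^{+,\ast}}(K_{\infty,v},T^\ast)$ cutting out the Selmer group over $K_\infty$ with the corresponding $\bigoplus_{w\mid v}C'_w$ over $L_\infty$. At the archimedean places the local condition of $\mathcal{F}_{can}^{+,\ast}$ is the full local cohomology, so $C_v=0$; at the $2$-adic places the condition is $0$, so the local kernel is $\ker\!\big(H^1(K_{\infty,v},T^\ast)\to\bigoplus_{w\mid v}H^1(L_{\infty,w},T^\ast)\big)=H^1(\mathrm{Gal}(L_{\infty,w}/K_{\infty,v}),T^\ast)$, finite because the decomposition group is finite; at the remaining places of $\Sigma$ the condition is the divisible unramified one, which is preserved by restriction, and the local kernels are subquotients of the same finite groups. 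As $\Sigma$ is finite and $L_\infty/K_\infty$ is a finite extension, the total local kernel $\ker(\bigoplus_v C_v\to\bigoplus_w C'_w)$ is finite.

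The last step is a diagram chase: applying the snake lemma to the two tautological short exact sequences $0\to H^1_{\mathcal{F}_{can}^{+,\ast}}(F,T^\ast)\to H^1(G_{F,\Sigma},T^\ast)\to B_F/A_F\to 0$ (for $F=K_\infty$, and for $F=L_\infty$ after taking $\Delta$-invariants), where $B_F/A_F$ embeds into the corresponding sum of local quotients, shows that $\ker(\mathrm{res})$ embeds into $H^1(\Delta,T^\ast)$ and that $\mathrm{coker}(\mathrm{res})$ is an extension of a subgroup of $\mathrm{coker}\big(H^1(G_{K_\infty,\Sigma},T^\ast)\to H^1(G_{L_\infty,\Sigma},T^\ast)^{\Delta}\big)\subseteq H^2(\Delta,T^\ast)$ by a quotient of the total local kernel, both finite. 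I expect the main obstacle to be exactly this place-by-place bookkeeping, which is where the hypothesis $p=2$ genuinely bites: one must use that $L$, hence $L_\infty$, is totally real (so archimedean completions and their Galois groups do not change along $L_\infty/K_\infty$) and that the canonical local condition at $2$ dualizes to the \emph{zero} condition, so that no infinite archimedean direct sums survive in the limit and every residual contribution is genuinely finite rather than merely $\Lambda$-torsion. It is also worth double-checking along the way the compatibility of the definitions $H^1_{\mathcal{F}_{can}^{+,\ast}}(K_\infty,T^\ast)=\varinjlim_n H^1_{\mathcal{F}_{can}^{+,\ast}}(K_n,T^\ast)$ with the identification $\varinjlim_n H^1(G_{K_n,\Sigma},T^\ast)=H^1(G_{K_\infty,\Sigma},T^\ast)$ used throughout.
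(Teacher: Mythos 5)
Your plan is sound, and it takes a genuinely different route from the paper. The paper never works with the restriction map on $T^{\ast}$-cohomology at the infinite level: it stays at finite level $L_{n}/K_{n}$, passes to Pontryagin duals, and invokes the exact sequence of Proposition \ref{exact sequence of finie places} to sandwich $H^{1}_{\mathcal{F}^{+,\ast}_{\Sigma}}(F,T^{\ast})^{\vee}$ between $H^{2}_{+}(G_{F,\Sigma},T)$ and a sum of local $H^{2}$'s. The decisive structural input there is Corollary \ref{corolary coresriction}, which says corestriction induces an \emph{isomorphism} $H^{2}_{+}(G_{L_{n},\Sigma},T)_{\Delta_{n}}\xrightarrow{\sim}H^{2}_{+}(G_{K_{n},\Sigma},T)$; the snake lemma then shoves all error terms into $H_{0}$ and $H_{1}$ of the local piece, which are bounded by a lemma quoted from [AMO]. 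Your version replaces that single isomorphism by inflation--restriction for $\Delta$ acting on $H^{1}(G_{\bullet,\Sigma},T^{\ast})$ together with a place-by-place comparison of local conditions; both kernel and cokernel then sit in extensions by subquotients of finite groups $H^{i}(\Delta,T^{\ast})$ and of local kernels. Conceptually the two are dual ways of saying the same descent statement, but the paper's route buys a cleaner bookkeeping (one structural isomorphism absorbs all archimedean contributions), whereas yours is more elementary and does not need the $H^{2}_{+}$ formalism at all.

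Two points you should make precise before calling this a proof. First, the identification $H^{1}_{\mathcal{F}_{can}^{+,\ast}}(K_{\infty},T^{\ast})=\varinjlim_{n}H^{1}_{\mathcal{F}_{can}^{+,\ast}}(K_{n},T^{\ast})$ with an intrinsic Selmer group for $G_{K_{\infty},\Sigma}$ is \emph{not} part of the definition in the paper (which only defines the left side as the limit); you flag this yourself, but it is real work, and the paper's finite-level argument avoids it entirely. Second, the phrase ``at the remaining places of $\Sigma$ the local kernels are subquotients of the same finite groups'' hides a genuine computation: there you must compare $C_{v}=H^{1}(K_{\infty,v},T^{\ast})/H^{1}_{ur}(K_{\infty,v},T^{\ast})_{div}$ with its counterpart over $L_{\infty}$, and a snake-lemma term involving the cokernel of the map on the divisible unramified parts appears; one needs to observe (for instance) that $H^{1}_{ur}(K_{\infty,v},T^{\ast})$ essentially dies in the cyclotomic limit for $v\nmid 2\infty$, since $T^{\ast}$ is $2$-power torsion while the unramified quotient at $v$ has no pro-$2$ part left over $K_{\infty,v}$. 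Neither point is a gap in the strategy, but both are exactly where the work lies and should not be left implicit.
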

Before we prove this result we need a preliminary result: For every
 finite Galois extension $F$ of $K$, we have the exact sequence
 \begin{equation}\label{exact sequence of F}
 \xymatrix@=2pc{0\ar[r]&
 H^{1}_{\mathcal{F}^{+,\ast}_{\Sigma}}(F,T^{\ast})^{\vee}\ar[r]&
 H^{2}_{+}(G_{F,\Sigma},T)\ar[r]&\widetilde{\oplus}_{w\in\Sigma_{f}}H^{2}(F_{w},T)\ar[r]&0},
 \end{equation}
 where
 $\widetilde{\oplus}_{w\in\Sigma_{f}}H^{2}(F_{w},T)$
 denotes the kernel of the map
 $ \xymatrix@=2pc{\displaystyle{\oplus_{w\in\Sigma_{f}}}H^{2}(F_{w},T)\ar[r]&
 H^{0}(F,T^{\ast})^{\vee}}.
 $\\ Indeed, by dualizing the exact sequence defining the module
  $H^{1}_{\mathcal{F}^{+,\ast}_{\Sigma}}(F,T^{\ast})$;
\begin{equation*}
\xymatrix@=2pc{0\ar[r]&
H^{1}_{\mathcal{F}_{\Sigma}^{+,\ast}}(F,T^{\ast})\ar[r]&
H^{1}(G_{F,\Sigma},T^{\ast})\ar[r]&\displaystyle{\oplus_{w\in\Sigma_{f}}}
H^{1}(F_{w},T^{\ast})}
\end{equation*}
we obtain the exact sequence
\begin{equation*}
\xymatrix@=2pc{\displaystyle{\oplus_{w\in\Sigma_{f}}}
H^{1}(F_{w},T)\ar[r]&
H^{1}(G_{F,\Sigma},T^{\ast})^{\vee}\ar[r]&H^{1}_{\mathcal{F}_{\Sigma}^{+,\ast}}(F,T^{\ast})^{\vee}\ar[r]&0}.
\end{equation*}
Hence the exact sequence $(\ref{exact sequence of F})$  follows from
 Proposition \ref{exact sequence of finie places}.\vskip 6pt Now we
prove the Proposition \ref{Proposition pseudo isomorph}.
\begin{proof}
Let $n$ be a nonnegative integer and let $\Delta_{n}$ denote the
Galois group $\mathrm{Gal}(L_{n}/K_{n})$. Then the exact sequence
$(\ref{exact sequence of F})$ induces the commutative diagram
\begin{equation*}
\xymatrix@=1pc{  &
(H^{1}_{\mathcal{F}_{\Sigma}^{+,\ast}}(L_{n},T))_{\Delta_{n}}\ar[d]^-{N^{\prime}_{n}}\ar[r]&
H^{2}_{+}(G_{L_{n},\Sigma},T)_{\Delta_{n}}\ar[d]^-{N_{n}}_-{\wr}\ar[r]&
(\widetilde{\oplus}_{w\in\Sigma_{f}}H^{2}(L_{n,w},T))_{\Delta_{n}}\ar[r]\ar[d]^-{N^{\prime\prime}_{n}}&0\\
0\ar[r]&H^{1}_{\mathcal{F}_{\Sigma}^{+,\ast}}(K_{n},T)\ar[r]&
H^{2}_{+}(G_{K_{n},\Sigma},T)\ar[r]&
\widetilde{\oplus}_{w\in\Sigma_{f}}H^{2}(K_{n,w},T)\ar[r]&0}
\end{equation*}
where all vertical maps are induced by the corestriction. The one of
the middle is an isomorphism by Corollary \ref{corolary
coresriction}. By the snake lemma, we obtain
\begin{equation*}
\mathrm{coker}(N^{\prime}_{n})\cong \ker (N^{\prime\prime}_{n})\quad
\mbox{and}\quad \ker(N^{\prime}_{n})\cong \mathrm{coker}(\alpha_{n})
\end{equation*}
where
\begin{equation*}
\xymatrix@=2pc{\alpha_{n}:\;
H_{1}(\Delta_{n},H^{2}_{+}(G_{L_{n},\Sigma},T))\ar[r]&
H_{1}(\Delta_{n},\widetilde{\oplus}_{w\in\Sigma_{f}}H^{2}(L_{n,w},T))}.
\end{equation*}
The orders of the groups
\begin{equation*}
H_{0}(\Delta_{n},\widetilde{\oplus}_{w\in\Sigma_{f}}H^{2}(L_{n,w},T))\quad\mbox{and}\quad
H_{1}(\Delta_{n},\widetilde{\oplus}_{w\in\Sigma_{f}}H^{2}(L_{n,w},T))
\end{equation*}
are bounded independently  of $n$ ($\mathrm{cf.}$\, \cite[Lemma
3.7]{AMO1}). Therefore, the $\Lambda$-modules\\
 $H^{1}_{\mathcal{F}_{can}^{+,\ast}}(K_{\infty},T^{\ast})^{\vee}$
 and $(H^{1}_{\mathcal{F}_{can}^{+,\ast}}(L_{\infty},T^{\ast})^{\vee})_{\mathrm{Gal}(L_{\infty}/K_{\infty})}$
 are pseudo-isomorphic. This finishes the proof.
\end{proof}
For a nonnegative integer $n$, let $A_{n}^{+}$ denote the $2$-part
of the narrow class group of $L_{n}$, and let
\begin{equation*}
A_{\infty}^{+}:=\varprojlim_{n}A_{n}^{+}
\end{equation*}
where the injective limit is taken with respect to the norm maps.
\begin{pro}\label{chi quotient of restrincted class group} If one of the hypotheses $(\mathcal{H}_{2})$ or
$(\mathcal{H}_{3})$ holds then
\begin{equation*}
\mathrm{char}((A^{+}_{\infty})_{\chi})\quad\mbox{divides}\quad
\mathrm{char}(H^{1}_{\mathcal{F}^{+,\ast}_{can}}(K_{\infty},T^{\ast})^{\vee}).
\end{equation*}
\end{pro}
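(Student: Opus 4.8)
The plan is to bound $(A^{+}_{\infty})_{\chi}$ by first computing $H^{1}_{\mathcal{F}_{can}^{+,\ast}}$ over $L_{\infty}$, where $T^{\ast}$ is a trivial Galois module and Lemma \ref{Lemma narrow class} is available, and then descending to $K_{\infty}$ via Proposition \ref{Proposition pseudo isomorph}. Throughout, $\mathrm{char}$ is the characteristic ideal over $\Lambda$, and I use freely that pseudo-isomorphic $\Lambda$-modules --- and $\Lambda$-modules differing by a finite (equivalently, pseudo-null) sub- or quotient module --- have the same characteristic ideal.

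\textbf{Step 1: the finite level.} The Selmer structures $\mathcal{F}_{ur}^{+}$ and $\mathcal{F}_{can}^{+}$ on $T$ satisfy $\mathcal{F}_{ur}^{+}\leq\mathcal{F}_{can}^{+}$, with equality at all places outside $2$; dually, $\mathcal{F}_{can}^{+,\ast}\leq\mathcal{F}_{ur}^{+,\ast}$ on $T^{\ast}$. Plugging this pair into the exact sequence (\ref{exacte sequence of selmer structures}) over $F=L_{n}$ and using that $H^{1}_{\mathcal{F}_{can}^{+,\ast}}(L_{n,w},T^{\ast})=0$ while $H^{1}_{\mathcal{F}_{ur}^{+,\ast}}(L_{n,w},T^{\ast})=H^{1}_{ur}(L_{n,w},T^{\ast})_{div}=H^{1}_{ur}(L_{n,w},T^{\ast})$ for $w\mid 2$ (the last equality because $G_{w}/I_{w}$ is torsion-free and $T^{\ast}$ divisible), I get that $H^{1}_{\mathcal{F}_{can}^{+,\ast}}(L_{n},T^{\ast})$ is the kernel of $H^{1}_{\mathcal{F}_{ur}^{+,\ast}}(L_{n},T^{\ast})\to\bigoplus_{w\mid 2}H^{1}_{ur}(L_{n,w},T^{\ast})$. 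Since $L_{n}/L$ is finite Galois, Lemma \ref{Lemma narrow class} identifies the source with $\mathrm{Hom}(Cl^{+}_{L_{n}},T^{\ast})$, and unwinding the class field theory identification the $w$-component of this map is evaluation of a homomorphism on the ideal class $[w]$. Hence $H^{1}_{\mathcal{F}_{can}^{+,\ast}}(L_{n},T^{\ast})\cong\mathrm{Hom}(\bar{A}^{+}_{n},T^{\ast})$, where $\bar{A}^{+}_{n}:=A^{+}_{n}/Y_{n}$ and $Y_{n}$ is the subgroup of $A^{+}_{n}$ generated by the classes of the primes of $L_{n}$ above $2$.

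\textbf{Step 2: limit, duality, descent.} Taking the direct limit over $n$ (the transition maps are restriction, which under class field theory is dual to the norm on ideal class groups, everything being finite) gives $H^{1}_{\mathcal{F}_{can}^{+,\ast}}(L_{\infty},T^{\ast})\cong\mathrm{Hom}_{cont}(\bar{A}^{+}_{\infty},T^{\ast})$ with $\bar{A}^{+}_{\infty}:=\varprojlim_{n}\bar{A}^{+}_{n}$. Because $T^{\ast}=(\Phi/\mathcal{O})(\chi)$, Pontryagin duality over $\mathcal{O}$ (the different of $\mathcal{O}/\mathbb{Z}_{2}$ being an invertible ideal, hence invisible to $\mathrm{char}$) yields $H^{1}_{\mathcal{F}_{can}^{+,\ast}}(L_{\infty},T^{\ast})^{\vee}\cong\bar{A}^{+}_{\infty}\otimes_{\mathbb{Z}_{2}}\mathcal{O}(\chi^{-1})$ as $\mathcal{O}[[\mathcal{G}]]$-modules, up to a harmless twist by an invertible $\mathcal{O}$-module. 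Now Proposition \ref{Proposition pseudo isomorph} makes $H^{1}_{\mathcal{F}_{can}^{+,\ast}}(K_{\infty},T^{\ast})^{\vee}$ pseudo-isomorphic to the $\mathrm{Gal}(L_{\infty}/K_{\infty})$-coinvariants of the above; since $\mathrm{Gal}(L_{\infty}/K_{\infty})\xrightarrow{\ \sim\ }\mathrm{Gal}(L/K)=\Delta$ and the $\chi^{-1}$-twist converts these coinvariants into $\bar{A}^{+}_{\infty}\otimes_{\mathbb{Z}_{2}[\Delta]}\mathcal{O}(\chi)=(\bar{A}^{+}_{\infty})_{\chi}$, I obtain $\mathrm{char}\big(H^{1}_{\mathcal{F}_{can}^{+,\ast}}(K_{\infty},T^{\ast})^{\vee}\big)=\mathrm{char}\big((\bar{A}^{+}_{\infty})_{\chi}\big)$.

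\textbf{Step 3: discarding the $2$-adic primes --- the main obstacle.} It remains to show $\mathrm{char}\big((A^{+}_{\infty})_{\chi}\big)$ divides $\mathrm{char}\big((\bar{A}^{+}_{\infty})_{\chi}\big)$. From $0\to Y_{\infty}\to A^{+}_{\infty}\to\bar{A}^{+}_{\infty}\to 0$ with $Y_{\infty}:=\varprojlim_{n}Y_{n}$, right exactness of $-\otimes_{\mathbb{Z}_{2}[\Delta]}\mathcal{O}(\chi)$ shows the kernel of $(A^{+}_{\infty})_{\chi}\to(\bar{A}^{+}_{\infty})_{\chi}$ is a quotient of $(Y_{\infty})_{\chi}$, so it suffices to prove that $(Y_{\infty})_{\chi}$ is finite. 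Since the primes above $2$ are eventually totally ramified in $K_{\infty}/K$, every $2$-adic prime $\mathfrak{p}$ of $K$ has decomposition group $D_{\mathfrak{p}}$ of finite index in $\mathcal{G}$; thus $Y_{\infty}$ is finitely generated over $\mathbb{Z}_{2}$ and is a quotient of $\bigoplus_{\mathfrak{p}\mid 2}\mathbb{Z}_{2}[\mathcal{G}/D_{\mathfrak{p}}]$, each summand being generated by a class fixed by $D_{\mathfrak{p}}$. Under $(\mathcal{H}_{2})$ we have $\chi(\mathrm{Frob}_{\mathfrak{p}})\neq 1$, so $(\mathbb{Z}_{2}[\mathcal{G}/D_{\mathfrak{p}}])_{\chi}$ is a quotient of $\mathcal{O}/(\chi(\mathrm{Frob}_{\mathfrak{p}})-1)$, which is finite; hence $(Y_{\infty})_{\chi}$ is finite. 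Under $(\mathcal{H}_{3})$ the same finiteness is obtained from Leopoldt's conjecture through the standard exact sequence of Iwasawa theory relating the global units, the semi-local units at $2$, and the decomposition subgroup of $A^{+}_{\infty}$ generated by the $2$-adic primes; extracting this from Leopoldt, rather than from the transparent $(\mathcal{H}_{2})$, is the delicate point, together with the archimedean bookkeeping hidden in the $\mathcal{F}^{+}$-structure and in Lemma \ref{Lemma narrow class}. In either case $\mathrm{char}\big((A^{+}_{\infty})_{\chi}\big)=\mathrm{char}\big((\bar{A}^{+}_{\infty})_{\chi}\big)$, which combined with Step 2 proves the proposition.
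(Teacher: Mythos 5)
Your overall route is the same as the paper's: compare $\mathcal{F}_{can}^{+,\ast}$ to $\mathcal{F}_{ur}^{+,\ast}$ via the exact sequence $(\ref{exacte sequence of selmer structures})$ over $L_{n}$, invoke Lemma~\ref{Lemma narrow class} to bring in the narrow class group, use Proposition~\ref{Proposition pseudo isomorph} to descend from $L_{\infty}$ to $K_{\infty}$, and then deal with the excess coming from the $2$-adic places by a case split on $(\mathcal{H}_{2})$ versus $(\mathcal{H}_{3})$. Your Steps~1--2 make explicit what the paper leaves as a Selmer-theoretic exact sequence --- namely the identification $H^{1}_{\mathcal{F}_{can}^{+,\ast}}(L_{n},T^{\ast})\cong\mathrm{Hom}(\bar{A}^{+}_{n},T^{\ast})$ with $\bar{A}^{+}_{n}=A^{+}_{n}/Y_{n}$ --- and this is a clean reformulation of what the paper's first exact sequence encodes.

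The problem is Step~3, and specifically the $(\mathcal{H}_{3})$ case. There you must show that the kernel of $(A^{+}_{\infty})_{\chi}\to(\bar{A}^{+}_{\infty})_{\chi}$, a quotient of $(Y_{\infty})_{\chi}$, has \emph{trivial} characteristic ideal --- since $(Y_{\infty})_{\chi}$ is a finitely generated $\mathcal{O}$-module, this means you must show it is \emph{finite}. You assert this follows "from Leopoldt's conjecture through the standard exact sequence of Iwasawa theory," but you give no argument, and the claim does not obviously follow: under $(\mathcal{H}_{3})$ alone some $2$-adic $v$ may have $\chi(\mathrm{Frob}_{v})=1$, in which case the relevant summand $(\mathcal{O}(\chi^{-1})[\mathcal{G}/D_{v}])_{\Delta}\cong\mathcal{O}[\Gamma/D_{v}^{\Gamma}]$ is of positive $\mathcal{O}$-rank, so finiteness of its image in $(A^{+}_{\infty})_{\chi}$ needs a genuine argument. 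The paper does not attempt to prove that finiteness. Instead it establishes only the weaker divisibility $\mathrm{char}((A^{+}_{\infty})_{\chi})\mid\mathcal{J}^{s}\,\mathrm{char}(H^{1}_{\mathcal{F}_{can}^{+,\ast}}(K_{\infty},T^{\ast})^{\vee})$, then uses Leopoldt to show $\mathrm{char}((A_{\infty})_{\chi})$ (the \emph{non-narrow} class group) is prime to $\mathcal{J}$, and finally passes from $A_{\infty}$ to $A^{+}_{\infty}$ via the exact sequence $\oplus_{v\mid\infty}H^{1}_{Iw}(K_{v},T)\to H^{1}_{\mathcal{F}_{ur}^{+,\ast}}(K_{\infty},T^{\ast})^{\vee}\to H^{1}_{\mathcal{F}_{ur}^{\ast}}(K_{\infty},T^{\ast})^{\vee}\to 0$, whose left-hand term is $2$-torsion hence coprime to $\mathcal{J}$. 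That is a more indirect but self-contained chain of implications. Your version substitutes a flat assertion of finiteness for this chain, which is a genuine gap --- not only is the argument missing, it is a strictly stronger statement than what the paper actually proves, and it is not clear it is true.

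A smaller imprecision, also in Step~3: your claim that $(\mathbb{Z}_{2}[\mathcal{G}/D_{\mathfrak{p}}])_{\chi}$ is a quotient of $\mathcal{O}/(\chi(\mathrm{Frob}_{\mathfrak{p}})-1)$ is not correct as stated; there is an additional factor $\mathcal{O}[\Gamma/D_{\mathfrak{p}}^{\Gamma}]$ since $D_{\mathfrak{p}}$ need not surject onto $\Gamma$. This does not affect the conclusion of finiteness under $(\mathcal{H}_{2})$ (the factor is finite), but the stated reduction is wrong.
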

\begin{proof}
 Consider the exact sequence
\begin{equation*}
\xymatrix@=2pc{H^{1}_{\mathcal{F}_{ur}^{\ast}}(L_{n,2},T^{\ast})^{\vee}\ar[r]&
H^{1}_{\mathcal{F}_{ur}^{+,\ast}}(L_{n},T^{\ast})^{\vee}\ar[r]&
H^{1}_{\mathcal{F}_{can}^{+,\ast}}(L_{n},T^{\ast})^{\vee}\ar[r]&0}.
\end{equation*}
Since
\begin{equation*}
H^{1}_{\mathcal{F}_{ur}^{\ast}}(L_{n,2},T^{\ast})\cong \oplus_{w\mid
2}\mathrm{Hom}(G_{w}/I_{w},T^{\ast}),
\end{equation*}
  we obtain
\begin{equation*}
H^{1}_{\mathcal{F}_{ur}^{\ast}}(L_{n,2},T^{\ast})^{\vee}\cong
\oplus_{v\mid
p}\mathcal{O}(\chi^{-1})[\mathrm{Gal}(L_{n}/K)/D_{v}(L_{n}/K)].
\end{equation*}
Passing to the projective limit and taking the
$\Delta$-co-invariants, we get
\begin{equation*}
(\mathcal{O}(\chi^{-1})[\mathcal{G}/D_{v}(L_{\infty}/K)])_{\Delta}\simeq\left\{
                                                                          \begin{array}{ll}
                                                                            \mbox{finite}, & \hbox{if $\chi(D_{v}(L/K))\neq 1$;} \\
                                                                            \mathcal{O}[\mathrm{Gal}(K_{\infty}/K)/D_{v}(K_{\infty}/K)],
                                                                             & \hbox{if  $\chi(D_{v}(L/K))=1$,}
                                                                          \end{array}
                                                                        \right.
\end{equation*}
where $\Delta=\mathrm{Gal}(L_{\infty}/K_{\infty})$. Using
Proposition \ref{Proposition pseudo isomorph} and Lemma $\ref{Lemma
narrow class}$, we obtain
\begin{equation*}
\mathrm{char}((A^{+}_{\infty})_{\chi})\quad\mbox{divides}\quad
\mathcal{J}^{s}\mathrm{char}(H^{1}_{\mathcal{F}^{+,\ast}_{can}}(K_{\infty},T^{\ast})^{\vee})
\end{equation*}
where $\mathcal{J}$ is the augmentation ideal of $\Lambda$ and
$s=\#\{ v\mid 2 \;;\; \chi(\mathrm{Frob}_{v})=1 \}$. Since $L$ is
totally real, the characteristic ideal
$\mathrm{char}((A_{\infty})_{\chi})$ is prime to $\mathcal{J}$, by
Leopoldt conjecture. Then the exact sequence
\begin{equation*}
\xymatrix@=2pc{
\displaystyle{\oplus_{v\mid\infty}}H^{1}_{Iw}(K_{v},T)\ar[r]&
H^{1}_{\mathcal{F}_{ur}^{+,\ast}}(K_{\infty},T^{\ast})^{\vee}\ar[r]&
H^{1}_{\mathcal{F}_{ur}^{\ast}}(K_{\infty},T^{\ast})^{\vee}\ar[r]&0}
\end{equation*}
permits to conclude.
\end{proof}
To obtain some information about the $\Lambda$-structure of
$H^{1}_{\mathcal{F}_{can}^{+}}(K_{\infty},T)$, we need some facts
from universal norms  in $\mathbb{Z}_{2}$-extension \cite{Va 06, Va
09}. Let
\begin{equation*}
    H^{1}_{Iw}(K,.):=\displaystyle{\varprojlim_{n}}H^{1}(G_{K_{n},\Sigma},.)\quad\mbox{and}\quad
    H^{1}_{Iw,+}(K,.):=\displaystyle{\varprojlim_{n}}H^{1}_{+}(G_{K_{n},\Sigma},.).
\end{equation*}
The next proposition is a first step towards Theorem \ref{Theorem
free} below, which claims that the $\Lambda$-modules
$H^{1}_{\mathcal{F}_{can}^{+}}(K_{\infty},T)$ and
$H^{1}_{\mathcal{F}_{can}}(K_{\infty},T)$ are $\Lambda$-free.
\begin{pro}\label{Proposition free}
There are canonical isomorphisms
\begin{enumerate}
    \item $H^{1}_{\mathcal{F}_{can}}(K_{\infty},T)\cong
    H^{1}_{Iw}(K,T)$.
    \item $H^{1}_{\mathcal{F}_{can}^{+}}(K_{\infty},T)\cong
    H^{1}_{Iw,+}(K,T)$.
\end{enumerate}
\end{pro}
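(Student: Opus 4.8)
The plan is to establish both isomorphisms by identifying the Selmer conditions defining $H^1_{\mathcal{F}_{can}}$ (resp. $H^1_{\mathcal{F}_{can}^+}$) with no conditions at all after passing to the projective limit, so that the Selmer group collapses to the full global cohomology group $H^1_{Iw}(K,T)$ (resp. $H^1_{Iw,+}(K,T)$). First I would recall that, by definition, $H^1_{\mathcal{F}_{can}}(K_n,T)$ is the kernel of the localization map from $H^1(G_{K_n,\Sigma},T)$ to $\bigoplus_{w\in\Sigma}H^1(K_{n,w},T)/H^1_{\mathcal{F}_{can}}(K_{n,w},T)$. At the $2$-adic places and the infinite places the local condition for $\mathcal{F}_{can}$ is the full local cohomology group, so those summands vanish; the only nontrivial contributions come from the finitely many $w\in\Sigma_f$ with $w\nmid 2$, where the condition is $H^1_{\mathcal{F}_{ur}}(K_{n,w},T)=H^1(K_{n,w},T)^{u,sat}$, the saturation of the universal norms.

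Next I would invoke the structure of $T=\mathbb{Z}_2(1)\otimes\mathcal{O}(\chi^{-1})$ at such a place $w$: since $\chi$ is unramified outside $\Sigma$ and $K_\infty/K$ is the cyclotomic $\mathbb{Z}_2$-extension, the place $w$ is finitely decomposed or its decomposition behaviour is controlled, and the local Iwasawa cohomology $\varprojlim_n H^1(K_{n,w},T)/H^1_{\mathcal{F}_{ur}}(K_{n,w},T)$ is the projective limit of finite groups whose orders are bounded independently of $n$ — in fact the quotients $H^1(K_{n,w},T)/H^1(K_{n,w},T)^{u,sat}$ stabilize, and after taking the full projective limit over the cyclotomic tower the corestriction maps are eventually zero on these quotients (this is the standard fact that the "singular" quotient at a place not above $2$ dies in the limit because the universal norm submodule exhausts the cohomology along the $\mathbb{Z}_2$-extension). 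Hence $\varprojlim_n\bigl(\bigoplus_{w\in\Sigma_f, w\nmid 2}H^1(K_{n,w},T)/H^1_{\mathcal{F}_{ur}}(K_{n,w},T)\bigr)=0$, and since projective limit is left exact we get $H^1_{\mathcal{F}_{can}}(K_\infty,T)=\varprojlim_n H^1_{\mathcal{F}_{can}}(K_n,T)\cong\varprojlim_n H^1(G_{K_n,\Sigma},T)=H^1_{Iw}(K,T)$, which is assertion (1).

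For assertion (2), $\mathcal{F}_{can}^+$ differs from $\mathcal{F}_{can}$ only at the infinite places, where the local condition is changed from $H^1(K_{n,w},T)$ to $0$. By the definition of totally positive cohomology via the exact sequence relating $H^i_+$, $H^i$, and $\bigoplus_{w\mid\infty}H^i(K_{n,w},T)$ (Proposition \ref{Proposition properties of positive cohomology}(i)), imposing the zero condition at the archimedean places on a class in $H^1(G_{K_n,\Sigma},T)$ is exactly the condition that it lifts to $H^1_+(G_{K_n,\Sigma},T)$; more precisely the long exact sequence identifies $\ker\bigl(H^1(G_{K_n,\Sigma},T)\to\bigoplus_{w\mid\infty}H^1(K_{n,w},T)\bigr)$ with the image of $H^1_+(G_{K_n,\Sigma},T)$. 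Combining this with the computation at the finite places above, $H^1_{\mathcal{F}_{can}^+}(K_n,T)$ is the preimage in $H^1_+(G_{K_n,\Sigma},T)$ of $H^1_{\mathcal{F}_{can}}(K_n,T)$ under the natural map, and passing to the limit as in (1) the finite-place quotients vanish, yielding $H^1_{\mathcal{F}_{can}^+}(K_\infty,T)\cong H^1_{Iw,+}(K,T)$.

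The main obstacle I expect is the verification that the local "singular" quotients at the non-$2$-adic finite places of $\Sigma$ vanish in the projective limit over the cyclotomic $\mathbb{Z}_2$-tower; this requires care because $H^1_{\mathcal{F}_{ur}}$ is the \emph{saturation} of the universal norms rather than the universal norms themselves, so one must check both that the universal-norm submodule exhausts $H^1(K_{n,w},T)$ in the limit and that the bounded torsion discrepancy between $H^1(F_w,T)^u$ and its saturation also disappears — this is where one uses hypothesis $(\mathcal{H}_2)$, ensuring $\chi(\mathrm{Frob}_{\mathfrak p})\neq 1$ and hence that $T$ has no $G_{K_w}$-invariants of the relevant type. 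A secondary subtlety is making the identification in (2) genuinely canonical, i.e. checking compatibility of the long exact sequence of Proposition \ref{Proposition properties of positive cohomology}(i) with corestriction, which follows from functoriality of the defining triangle for $M_+$.
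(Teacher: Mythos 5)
For part (1) your approach matches the paper's: reduce to showing $\varprojlim_n\bigoplus_{w\in\Sigma_f\setminus\Sigma_2}H^1(K_{n,w},T)/H^1_{\mathcal{F}_{ur}}(K_{n,w},T)=0$, which the paper cites as the proof of \cite[Proposition 3.5]{AMO1}. Two small remarks, though. First, your invocation of $(\mathcal{H}_2)$ is spurious: that hypothesis concerns $2$-adic primes, where $\mathcal{F}_{can}$ imposes no condition at all, and the paper's proposition carries no hypotheses. Second, your worry about saturation runs in the wrong direction --- since $H^1_{\mathcal{F}_{ur}}(K_{n,w},T)\supseteq H^1(K_{n,w},T)^u$, the singular quotient with the saturated condition is \emph{smaller}, so vanishing of the universal-norm quotient in the limit implies vanishing of the saturated one a fortiori; there is no ``bounded torsion discrepancy'' to worry about.

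For part (2) there is a genuine gap. You correctly observe, via Proposition \ref{Proposition properties of positive cohomology}(i), that a class in $H^1(G_{K_n,\Sigma},T)$ dies at every archimedean place if and only if it is in the \emph{image} of $H^1_+(G_{K_n,\Sigma},T)$. But that only identifies $H^1_{\mathcal{F}_{can}^+}(K_n,T)$, after the finite singular quotients are disposed of, with $\mathrm{Im}(H^1_+(G_{K_n,\Sigma},T)\to H^1(G_{K_n,\Sigma},T))$, not with $H^1_+(G_{K_n,\Sigma},T)$ itself. To pass from the image to $H^1_+$ --- and hence to conclude $H^1_{\mathcal{F}_{can}^+}(K_\infty,T)\cong\varprojlim_n H^1_+(G_{K_n,\Sigma},T)=H^1_{Iw,+}(K,T)$ --- you must know that the map $H^1_+(G_{K_n,\Sigma},T)\to H^1(G_{K_n,\Sigma},T)$ is \emph{injective}. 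By the same long exact sequence its kernel is a quotient of $\bigoplus_{w\mid\infty}H^0(K_{n,w},T)$, so the needed input is $H^0(K_{n,w},T)=0$ for every real place $w$. This is exactly the observation the paper makes: since $\chi$ is totally even, complex conjugation acts on $T=\mathbb{Z}_2(1)\otimes\mathcal{O}(\chi^{-1})$ by $-1$, and $T$ is $\mathcal{O}$-free, so $T^{G_{K_{n,w}}}=T[2]=0$. Without this vanishing your ``preimage'' formulation would overcount by the kernel of $H^1_+\to H^1$, and the claimed isomorphism would fail. Add that one sentence and the proof closes.
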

\begin{proof}
Let $n$ be a nonnegative integer. By definition we have the exact
sequence
\begin{equation*}
\xymatrix@=1.5pc{ 0\ar[r]& H^{1}_{\mathcal{F}_{can}}(K_{n},T)\ar[r]&
H^{1}(G_{K_{n},\Sigma},T)\ar[r]&
\bigoplus_{w\in\Sigma_{f}-\Sigma_{2}}H^{1}(K_{n,w},T)/H^{1}_{\mathcal{F}_{ur}}(K_{n,w},T)}.
\end{equation*}
Passing to projective limit over $n$, the assertion $(1)$ follows
from the proof of \cite[Proposition 3.5]{AMO1}. In order to obtain
$(2)$,  we have on the one hand the exact sequence
\begin{equation*}
\xymatrix@=2pc{0\ar[r]&H^{1}_{\mathcal{F}_{can}^{+}}(K_{n},T)\ar[r]&H^{1}(G_{K_{n},\Sigma},T)\ar[r]&
\displaystyle{\oplus_{w\mid\infty}}H^{1}(K_{n,w},T)}.
\end{equation*}
On the other hand, by Proposition \ref{Proposition properties of
positive cohomology} we have an exact sequence
\begin{equation}\label{equation 1 proof }
\xymatrix@=1.5pc{\displaystyle{\oplus_{w\mid
\infty}}H^{0}(K_{n,w},T)\ar[r]&
H^{1}_{+}(G_{K_{n},\Sigma},T)\ar[r]&H^{1}(G_{K_{n},\Sigma},T)\ar[r]&
\displaystyle{\oplus_{w\mid \infty}}H^{1}(K_{n,w},T)}.
\end{equation}
Since $T=\mathbb{Z}_{2}(1)\otimes\mathcal{O}(\chi^{-1})$ and $w$ is
a real place, we have $H^{0}(K_{n,w},T)=0$. Hence
\begin{equation*}
H^{1}_{\mathcal{F}_{can}^{+}}(K_{\infty},T)\cong H^{1}_{Iw,+}(K,T).
\end{equation*}
\end{proof}
 We will need  the following isomorphism:
For any Galois extension $F/F^{\prime}$ of number
 fields, $K\subset F^{\prime}\subset F$, the restriction map
 \begin{equation}\label{the restriction isomorphism}
\xymatrix@=1.5pc{ \mathrm{res}: H^{1}(F^{\prime},T)\ar[r]^-{\sim}&
H^{1}(F,T)^{\mathrm{Gal}(F/F^{\prime})}}
 \end{equation}
 induces an isomorphism. Indeed, since $\chi$ has finite order, we can assume that
$\chi(G_{F})=1$. Then
\begin{eqnarray*}
  T^{G_{F}} &=& (\mathbb{Z}_{2}(1)\otimes\mathcal{O}(\chi^{-1}))^{G_{F}} \\
   &=& \mathbb{Z}_{2}(1)^{G_{F}}\otimes\mathcal{O}(\chi^{-1})
\end{eqnarray*}
is trivial. Hence the inflation-restriction exact sequence
\begin{equation*}
\xymatrix@=1.5pc{ 0\ar[r]& H^{1}(F/F^{\prime}, T^{G_{F}})\ar[r]&
H^{1}(F^{\prime},T)\ar[r]&
H^{1}(F,T)^{\mathrm{Gal}(F^{\prime}/F)}\ar[r]& H^{1}(F/F^{\prime},
T^{G_{F}})}
\end{equation*}
gives the isomorphism (\ref{the restriction isomorphism}).\vskip 6pt
For an $\mathcal{O}$-module $M$, let
\begin{itemize}[label= ]
    \item $\mathrm{Tor}_{\mathcal{O}}(M)$: the torsion submodule
    and
    \item
    $\mathrm{Fr}_{\mathcal{O}}(M)=M/\mathrm{Tor}_{\mathcal{O}}(M)$
    : the maximal torsion-free quotient of $M$.
\end{itemize}
\begin{theo}\label{Theorem free}
Suppose that every infinite place of $K$ is completely decomposed in
$L/K$. Then the $\Lambda$-modules
$H^{1}_{\mathcal{F}_{can}^{+}}(K_{\infty},T)$ and
$H^{1}_{\mathcal{F}_{can}}(K_{\infty},T)$ are $\Lambda$-free.
\end{theo}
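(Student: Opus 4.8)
The plan is to reduce to a statement about Iwasawa cohomology and then apply a standard freeness criterion for modules over $\Lambda=\mathcal{O}[[\Gamma]]$. By Proposition~\ref{Proposition free}, the two modules in question are $H^{1}_{Iw}(K,T)$ and $H^{1}_{Iw,+}(K,T)$, so it is enough to prove these are $\Lambda$-free. Since $\Lambda$ is a regular local ring of Krull dimension $2$, a finitely generated $\Lambda$-module $M$ is free if and only if $\mathrm{depth}_{\Lambda}M=2$; fixing a topological generator $\gamma$ of $\Gamma$ and a uniformizer $\varpi$ of $\mathcal{O}$, it suffices to check that $\gamma-1$ acts injectively on $M$ (i.e. $M^{\Gamma}=0$) and that $M_{\Gamma}=M/(\gamma-1)M$ is $\mathcal{O}$-torsion free (hence $\mathcal{O}$-free). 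Indeed, applying $-\otimes_{\Lambda}\mathcal{O}$ to a minimal presentation $\Lambda^{d}\twoheadrightarrow M$ and using that $\mathrm{Tor}_{1}^{\Lambda}(M,\mathcal{O})=M^{\Gamma}=0$ while $M_{\Gamma}$ is $\mathcal{O}$-free of rank $d$ forces the kernel to be $(\gamma-1)$-divisible, hence zero by topological Nakayama, so $M\cong\Lambda^{d}$.

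\emph{The invariants vanish.} Write $\Gamma_{n}=\mathrm{Gal}(K_{\infty}/K_{n})$. The inflation--restriction argument underlying the isomorphism~(\ref{the restriction isomorphism}), applied to the extension $K_{\infty}/K_{n}$ and using $T^{G_{K_{\infty}}}=0$, identifies $H^{1}(G_{K_{n},\Sigma},T)$ with $H^{1}(G_{K_{\infty},\Sigma},T)^{\Gamma_{n}}$, the corestriction maps of the tower becoming the algebraic norms $\sum_{\sigma\in\Gamma_{n}/\Gamma_{n+1}}\sigma$. Thus $H^{1}_{Iw}(K,T)=\varprojlim_{n}H^{1}(G_{K_{\infty},\Sigma},T)^{\Gamma_{n}}$, and a norm-coherent system $(x_{n})_{n}$ fixed by $\gamma$ has every $x_{n}$ in $H^{1}(G_{K_{\infty},\Sigma},T)^{\Gamma}\cong H^{1}(G_{K,\Sigma},T)$ with $x_{m}=2^{n}x_{n+m}$ for all $m,n$; since $H^{1}(G_{K,\Sigma},T)$ is a finitely generated $\mathcal{O}$-module this forces $x_{m}=0$ for all $m$, so $H^{1}_{Iw}(K,T)^{\Gamma}=0$. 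Because the long exact sequence of Proposition~\ref{Proposition properties of positive cohomology}(i) together with $H^{0}(K_{n,w},T)=0$ at the real places $w$ (used already for Proposition~\ref{Proposition free}) gives an injection $H^{1}_{Iw,+}(K,T)\hookrightarrow H^{1}_{Iw}(K,T)$, the same vanishing holds for the positive module.

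\emph{The coinvariants are $\mathcal{O}$-torsion free.} Let $\mathbb{T}=\Lambda\otimes_{\mathcal{O}}T$ be the $\Lambda$-adic twist of $T$, so that $H^{i}(G_{K,\Sigma},\mathbb{T})=H^{i}_{Iw}(K,T)$; the long exact cohomology sequence of $0\to\mathbb{T}\xrightarrow{\gamma-1}\mathbb{T}\to T\to 0$, together with $H^{0}(G_{K,\Sigma},T)=0$, yields
\begin{equation*}
0\longrightarrow H^{1}_{Iw}(K,T)_{\Gamma}\longrightarrow H^{1}(G_{K,\Sigma},T)\longrightarrow H^{2}_{Iw}(K,T)[\gamma-1]\longrightarrow 0 ,
\end{equation*}
which in particular embeds $H^{1}_{Iw}(K,T)_{\Gamma}$ into $H^{1}(G_{K,\Sigma},T)$ as the module of universal norms from $K_{\infty}$. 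One checks first that $H^{1}_{Iw}(K,T)$ is already $\mathcal{O}$-torsion free -- its $\varpi$-torsion is the image of $\varprojlim_{n}(T/\varpi T)^{G_{K_{n}}}$, which vanishes because if such an invariant line is non-zero the corestriction maps act on it by multiplication by $2$ and hence kill it in the limit -- and then that the potential $\varpi$-torsion class in $H^{1}(G_{K,\Sigma},T)$ is not a universal norm; this last point is exactly where the hypotheses $(\mathcal{H}_{1})$, $(\mathcal{H}_{2})$ and Vauclair's description of universal norms in $\mathbb{Z}_{2}$-extensions \cite{Va 06, Va 09} enter. This gives the $\Lambda$-freeness of $H^{1}_{Iw}(K,T)$. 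For $H^{1}_{Iw,+}(K,T)$ one uses the short exact sequence $0\to H^{1}_{Iw,+}(K,T)\to H^{1}_{Iw}(K,T)\to C\to 0$ obtained from the archimedean localization (the groups $H^{1}_{+}(G_{K_{n},\Sigma},T)$ are finitely generated over $\mathcal{O}$, so the relevant $\varprojlim^{1}$ vanishes): the hypothesis that every infinite place of $K$ splits completely in $L/K$ forces $H^{1}(K_{n,w},T)\cong T/2T$ with trivial decomposition group for $w\mid\infty$, whence $\varprojlim_{n}\bigoplus_{w\mid\infty}H^{1}(K_{n,w},T)\cong(\Lambda/2\Lambda)^{r}$ and $C$ is a $\Lambda/2\Lambda$-submodule of a free $\Lambda/2\Lambda$-module. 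Since $\gamma-1$ is a non-zero-divisor on $\Lambda/2\Lambda$ we get $C^{\Gamma}=0$, and the homology long exact sequence then yields an injection $H^{1}_{Iw,+}(K,T)_{\Gamma}\hookrightarrow H^{1}_{Iw}(K,T)_{\Gamma}$, which is $\mathcal{O}$-free; combined with the vanishing of the invariants this proves $H^{1}_{Iw,+}(K,T)$ is $\Lambda$-free.

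The delicate part is the torsion control in the second step. Because $p=2$ the archimedean local cohomology $H^{1}(K_{n,w},T)\cong T/2T$ does not vanish, so the totally positive Selmer module is a proper submodule of the full one and the descent must be carried out through the totally positive cohomology groups -- Corollary~\ref{corolary coresriction} being the input that keeps that descent exact -- and one cannot simply invoke $\mathcal{O}$-torsion freeness of $H^{1}(G_{K,\Sigma},T)$; the $2$-torsion that may a priori appear has to be eliminated using the hypotheses and the universal-norm computations. Everything else is formal.
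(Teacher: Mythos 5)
Your proof takes a genuinely different route from the paper: the paper reduces to Theorem 1.9 of Vauclair \cite{Va 06}, whose hypotheses it verifies by Hochschild--Serre applied to the direct-limit cohomology $\mathcal{H}^{1}(K_{\infty},T)$ and $\mathcal{H}^{1}_{+}(K_{\infty},T)$, whereas you work directly on the inverse limits $H^{1}_{Iw}(K,T)$ and $H^{1}_{Iw,+}(K,T)$ and apply the depth-two criterion over $\Lambda$. The criterion itself, and the first half of its verification -- that $M^{\Gamma}=0$ because a $\gamma$-invariant norm-coherent class becomes infinitely $2$-divisible in a finitely generated $\mathcal{O}$-module -- are correct, and so is your observation that $H^{1}_{Iw}(K,T)[\varpi]=\varprojlim_{n}(T/\varpi T)^{G_{K_{n}}}$ vanishes because the transition maps are multiplication by $2$.

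However, the second half of the criterion -- that $M_{\Gamma}$ is $\mathcal{O}$-torsion-free, equivalently that $(M/\varpi)^{\Gamma}=0$ -- is not actually proved. Knowing $M^{\Gamma}=0$ and $M[\varpi]=0$ does not by itself give depth two: you need one more regularity statement, and you concede as much when you write that ``the potential $\varpi$-torsion class in $H^{1}(G_{K,\Sigma},T)$ is not a universal norm'' is ``exactly where the hypotheses $(\mathcal{H}_{1})$, $(\mathcal{H}_{2})$ and Vauclair's description of universal norms enter,'' without carrying out that computation. This is the actual content of the theorem, not a routine step; the whole point of citing Vauclair in the paper is that this exactly this kind of ``no $\varpi$-torsion descends to the bottom layer'' statement is what his Theorem 1.9 encapsulates. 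A gesture at a reference plus unstated ``universal-norm computations'' does not close the gap. Moreover the hypotheses $(\mathcal{H}_{1})$ and $(\mathcal{H}_{2})$ that you invoke are \emph{not} hypotheses of Theorem \ref{Theorem free}: the only stated assumption is that the infinite places of $K$ split completely in $L/K$ (which, under the standing assumption $L=L_{\chi}$ with $\chi$ totally even, is automatic). If your argument for the coinvariants genuinely needs $(\mathcal{H}_{1})$ and $(\mathcal{H}_{2})$, then either you are proving a weaker statement than Theorem \ref{Theorem free} or you need to explain how to remove those assumptions. The archimedean computation giving $C\subset(\Lambda/2\Lambda)^{r}$, and the resulting reduction of the $+$-case to the ordinary case, are fine once the ordinary case is established.
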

\begin{proof} By Proposition $\ref{Proposition free}$, it suffices to prove that the $\Lambda$-modules
$H^{1}_{Iw}(K,T)$ and $H^{1}_{Iw,+}(K,T)$ are $\Lambda$-free. For
this we claim that
\begin{enumerate}[label=(\roman*)]
    \item The groups $H^{1}(\Gamma_{n},\mathcal{H}^{1}_{+}(K_{\infty},T))$ and $H^{1}(\Gamma_{n},\mathcal{H}^{1}(K_{\infty},T))$
    are finite, and\\
     $\mathrm{Tor}_{\mathcal{O}}(\mathcal{H}^{1}_{+}(K_{\infty},T))=0$ and $\mathrm{Tor}_{\mathcal{O}}(\mathcal{H}^{1}(K_{\infty},T))=0.$
    \item The groups
    $H^{1}(\Gamma_{n},\mathrm{Fr}_{\mathcal{O}}(\mathcal{H}^{1}_{+}(K_{\infty},T)))$
    and  $H^{1}(\Gamma_{n},\mathrm{Fr}_{\mathcal{O}}(\mathcal{H}^{1}(K_{\infty},T)))$
    are co-finitely generated $\mathcal{O}$-modules,
\end{enumerate}
where $\Gamma_{n}$ denotes the Galois group
$\mathrm{Gal}(K_{\infty}/K_{n})$, and
\begin{equation*}
    \mathcal{H}^{1}(K_{\infty},.):=\displaystyle{\varinjlim_{n}}H^{1}(G_{K_{n},\Sigma},.)\quad\mbox{and}\quad
    \mathcal{H}^{1}_{+}(K_{\infty},.):=\displaystyle{\varinjlim_{n}}H^{1}_{+}(G_{K_{n},\Sigma},.).
\end{equation*}
 Using this claim
Theorem $1.9$ of \cite{Va 06} shows that the $\Lambda$-module
$H^{1}_{Iw}(K,T)$ and $H^{1}_{Iw,+}(K,T)$ are free.\vskip 6pt
\noindent \textit{Proof of the claim}: On the one hand the
Hochschild-Serre spectral sequence (see Proposition \ref{Proposition
properties of positive cohomology})
\begin{equation*}
H^{p}(\Gamma_{n},\mathcal{H}_{+}^{q}(K_{\infty},T))\Longrightarrow
H_{+}^{p+q}(G_{K_{n},\Sigma},T)
\end{equation*}
 induces  the exact sequence
\begin{equation*}
\xymatrix@=1pc{H^{1}(\Gamma_{n},\mathcal{H}^{1}_{+}(K_{\infty},T))\ar@{^{(}->}[r]&
H^{2}_{+}(G_{K_{n},\Sigma},T)\ar[r]&
\mathcal{H}^{2}_{+}(K_{\infty},T)^{\Gamma_{n}}\ar[r]&H^{2}(\Gamma_{n},\mathcal{H}^{1}_{+}(K_{\infty},T))}.
\end{equation*}
Since $H^{2}_{+}(G_{K_{n},\Sigma},T)$ is a finitely generated
$\mathcal{O}$-module and
$H^{1}(\Gamma_{n},\mathcal{H}^{1}_{+}(K_{\infty},T))$ is
$\mathcal{O}$-torsion, the module
$H^{1}(\Gamma_{n},\mathcal{H}^{1}_{+}(K_{\infty},T))$ is finite.
 On the other
hand the Hochschild-Serre spectral sequence
\begin{equation*}
H^{p}(\Gamma_{n},\mathcal{H}^{q}(K_{\infty},T))\Longrightarrow
H^{p+q}(G_{K_{n},\Sigma},T)
\end{equation*}
shows that
\begin{equation*}
\xymatrix@=2pc{
H^{1}(\Gamma_{n},\mathcal{H}^{1}(K_{\infty},T))\ar@{^{(}->}[r]&
H^{2}(G_{K_{n},\Sigma},T)}.
\end{equation*}
Hence $H^{1}(\Gamma_{n},\mathcal{H}^{1}(K_{\infty},T))$ is finite.
Since  $L_{n}$ is totally real, by the isomorphism $(\ref{the
restriction isomorphism})$, we get
$\mathrm{Tor}_{\mathcal{O}}(H^{1}(G_{K_{n},\Sigma},T))=0$.
Therefore, the exact sequence $(\ref{equation 1 proof })$ shows that
$\mathrm{Tor}_{\mathcal{O}}(H^{1}_{+}(G_{K_{n},\Sigma},T))=0$, hence
\begin{equation*}
\mathrm{Tor}_{\mathcal{O}}(\mathcal{H}^{1}_{+}(K_{\infty},T))=\mathrm{Tor}_{\mathcal{O}}(\mathcal{H}^{1}(K_{\infty},T))=0.
\end{equation*}
This proves the assertion $(\textit{i})$. The assertion
$(\textit{ii})$ is a direct consequence of $(\textit{i})$.
\end{proof}
\begin{coro}\label{Corollary rank of positive structure}
Suppose that every infinite place of $K$ is completely decomposed in
$L/K$. Then
\begin{equation*}
\mathrm{rank}_{\Lambda}(H^{1}_{\mathcal{F}_{can}^{+}}(K_{\infty},T))=
\mathrm{rank}_{\Lambda}(H^{1}_{\mathcal{F}_{can}}(K_{\infty},T))=[K:\mathbb{Q}].
\end{equation*}
\end{coro}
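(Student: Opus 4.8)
The plan is to transport the problem to Iwasawa cohomology and then run two separate rank computations. By Proposition \ref{Proposition free} there are canonical isomorphisms $H^{1}_{\mathcal{F}_{can}^{+}}(K_{\infty},T)\cong H^{1}_{Iw,+}(K,T)$ and $H^{1}_{\mathcal{F}_{can}}(K_{\infty},T)\cong H^{1}_{Iw}(K,T)$, so it suffices to show that these two $\Lambda$-modules have the same rank, and that this common rank is $r=[K:\mathbb{Q}]$.

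\emph{Equality of the two ranks.} Here I would use that $T=\mathbb{Z}_{2}(1)\otimes\mathcal{O}(\chi^{-1})$ with $\chi$ totally even, so that complex conjugation $c$ acts as $-1$ on $T$; since $K$, hence every $K_{n}$, is totally real, at each archimedean place $w$ of $K_{n}$ one has $H^{0}(K_{n,w},T)=T^{c=1}=0$, $H^{2}(K_{n,w},T)=0$, and $H^{1}(K_{n,w},T)\cong T/2T$, a finite group annihilated by $2$. Feeding this into the long exact sequence of Proposition \ref{Proposition properties of positive cohomology}(i) over $K_{n}$ collapses it to
\begin{equation*}
0\longrightarrow H^{1}_{+}(G_{K_{n},\Sigma},T)\longrightarrow H^{1}(G_{K_{n},\Sigma},T)\longrightarrow M_{n}\longrightarrow 0,
\end{equation*}
where $M_{n}$ is a finite subgroup of $\bigoplus_{w\mid\infty}T/2T$. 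All three terms are finitely generated over $\mathbb{Z}_{2}$, so $\varprojlim_{n}$ (along corestriction) is exact and gives an injection $H^{1}_{Iw,+}(K,T)\hookrightarrow H^{1}_{Iw}(K,T)$ with cokernel $\varprojlim_{n}M_{n}$, a $\Lambda$-module killed by $2$ and hence $\Lambda$-torsion ($\Lambda$ being a domain in which $2\neq 0$). Thus $\mathrm{rank}_{\Lambda}H^{1}_{Iw,+}(K,T)=\mathrm{rank}_{\Lambda}H^{1}_{Iw}(K,T)$, which is the first equality of the corollary.

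\emph{Value of the rank.} To identify it with $r$, I would apply the global Euler--Poincar\'e characteristic formula to $V=T\otimes\Phi$ over $K_{n}$. Since $T^{G_{K_{n}}}=0$ (no nontrivial $2$-power root of unity lies in $K_{n}$) and $H^{0}(K_{n,w},V)=0$ for every $w\mid\infty$, the formula becomes
\begin{equation*}
\mathrm{rank}_{\mathcal{O}}H^{1}(G_{K_{n},\Sigma},T)=[K_{n}:\mathbb{Q}]+\mathrm{rank}_{\mathcal{O}}H^{2}(G_{K_{n},\Sigma},T)=r\,2^{n}+\mathrm{rank}_{\mathcal{O}}H^{2}(G_{K_{n},\Sigma},T).
\end{equation*}
Dividing by $[K_{n}:K]=2^{n}$ and letting $n\to\infty$, via the usual comparison between $\mathcal{O}$-ranks at finite level and $\Lambda$-ranks of the associated Iwasawa cohomology (the discrepancy having bounded $\mathcal{O}$-rank), one gets $\mathrm{rank}_{\Lambda}H^{1}_{Iw}(K,T)=r+\mathrm{rank}_{\Lambda}H^{2}_{Iw}(K,T)$. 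It then remains to see that $H^{2}_{Iw}(K,T)$ is $\Lambda$-torsion. This is the weak Leopoldt statement for $T^{\ast}=(\Phi/\mathcal{O})(\chi)$ along the cyclotomic $\mathbb{Z}_{2}$-extension; concretely it can be extracted from the exact sequence $(\ref{exact sequence of F})$ at $K_{n}$, since the module $\widetilde{\oplus}_{w\in\Sigma_{f}}H^{2}(K_{n,w},T)$ has $\mathcal{O}$-rank bounded independently of $n$ (only finitely many places of $K_{n}$ lie above each finite place of $\Sigma$, each contributing rank at most one), while $\varprojlim_{n}H^{1}_{\mathcal{F}_{\Sigma}^{+,\ast}}(K_{n},T^{\ast})^{\vee}$ is $\Lambda$-torsion. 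Hence $\mathrm{rank}_{\Lambda}H^{1}_{Iw}(K,T)=r$, and by the previous step $\mathrm{rank}_{\Lambda}H^{1}_{Iw,+}(K,T)=r$ as well; transporting back through Proposition \ref{Proposition free} proves the corollary.

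The point I expect to require the most care — and the recurring difficulty in this paper — is the control of the archimedean places at $p=2$, where arguments based on cohomological dimension are unavailable. What saves the computation is precisely that $T=\mathbb{Z}_{2}(1)\otimes\mathcal{O}(\chi^{-1})$ with $\chi$ totally even forces complex conjugation to act by $-1$, so that the local archimedean cohomology of $T$ vanishes in degrees $0$ and $2$ and is only a bounded amount of $2$-torsion in degree $1$; combined with the (unconditional) weak Leopoldt property of the cyclotomic $\mathbb{Z}_{2}$-extension, this makes all the rank bookkeeping transparent.
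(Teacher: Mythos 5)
Your argument for the first equality matches the paper's in substance: both observe that the cokernel of $H^{1}_{\mathcal{F}_{can}^{+}}(K_{\infty},T)\hookrightarrow H^{1}_{\mathcal{F}_{can}}(K_{\infty},T)$ lands in $\varprojlim_{n}\oplus_{w\mid\infty}H^{1}(K_{n,w},T)$, which is killed by $2$ (since $K_{n,w}\cong\mathbb{R}$ and complex conjugation acts by $-1$ on $T$) and hence $\Lambda$-torsion; you pass through the long exact sequence of Proposition~\ref{Proposition properties of positive cohomology} at each level, while the paper applies $(\ref{exacte sequence of selmer structures})$ directly. For the value of the common rank, however, you take a genuinely different route. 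The paper identifies $H^{1}(G_{K_{n},\Sigma},T)$ with the $\chi^{-1}$-part of the $2$-adic completion of the $\Sigma$-units of $L_{n}$ via Kummer theory and $(\ref{the restriction isomorphism})$, and reads off $\mathrm{rank}_{\mathcal{O}}=r2^{n}+O(1)$ from Dirichlet's unit theorem (the hypothesis that infinite places split in $L/K$ enters through the Herbrand--Minkowski module structure of $\Sigma$-units). You instead use the global Euler--Poincar\'e characteristic formula, which gives $\mathrm{rank}_{\mathcal{O}}H^{1}=r2^{n}+\mathrm{rank}_{\mathcal{O}}H^{2}$ because $\chi$ totally even forces $H^{0}(K_{n,w},V)=0$ at every real $w$, and then you need the weak Leopoldt property for $T^{\ast}=D(\chi)$ along the cyclotomic $\mathbb{Z}_{2}$-extension, i.e.\ that $H^{2}_{Iw}(K,T)$ is $\Lambda$-torsion. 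Your reduction of weak Leopoldt to $(\ref{exact sequence of F})$ rests on the assertion that $\varprojlim_{n}H^{1}_{\mathcal{F}_{\Sigma}^{+,\ast}}(K_{n},T^{\ast})^{\vee}$ is $\Lambda$-torsion; this is true (it is essentially Iwasawa's theorem that the relevant $\chi$-part of the tower of $\Sigma$-split class groups is $\Lambda$-torsion, which does hold at $p=2$), but you state it without proof, whereas the paper's unit-theoretic route sidesteps weak Leopoldt entirely. Both arguments are correct; yours is more cohomological and would transport to other representations, the paper's is more elementary and self-contained for this specific $T$. You are also right that the passage from $\mathcal{O}$-ranks at finite level to the $\Lambda$-rank of the Iwasawa cohomology needs the control provided by the Hochschild--Serre spectral sequence (as used in the proof of Theorem~\ref{Theorem free}), and that the discrepancy has bounded $\mathcal{O}$-rank; a fully rigorous write-up should make that comparison explicit, as the paper's phrasing also leaves it implicit.
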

\begin{proof} Since $\mathcal{F}_{cn}^{+}\leq \mathcal{F}_{cn}$, by
$(\ref{exacte sequence of selmer structures})$ we have an exact
sequence
\begin{equation*}
\xymatrix@=2pc{ 0\ar[r]&
H^{1}_{\mathcal{F}_{can}^{+}}(K_{\infty},T)\ar[r]&
H^{1}_{\mathcal{F}_{can}}(K_{\infty},T)\ar[r]&
\displaystyle{\varprojlim_{n}}(\displaystyle{\oplus_{w\mid\infty}}H^{1}(K_{n,w},T))}.
\end{equation*}
Then
\begin{equation*}
\mathrm{rank}_{\Lambda}(H^{1}_{\mathcal{F}_{can}^{+}}(K_{\infty},T))=
\mathrm{rank}_{\Lambda}(H^{1}_{\mathcal{F}_{can}}(K_{\infty},T)).
\end{equation*}
Using the fact that $H^{1}(G_{L_{n},\Sigma},\mathbb{Z}_{2}(1))\cong
U_{\Sigma}(L_{n})\otimes\mathbb{Z}_{2}$, where $U_{\Sigma}(L_{n})$
denotes the $\Sigma$-units of $L_{n}$. Dirichlet's unit theorem and
the isomorphism $(\ref{the restriction isomorphism})$ show that
\begin{equation*}
\mathrm{rank}_{\mathcal{O}}(H^{1}(G_{K_{n},\Sigma},T))=r2^{n}+t,
\end{equation*}
where $r=[K:\mathbb{Q}]$ and $t$ is an integer independent of $n$.
Then
$\mathrm{rank}_{\Lambda}(H^{1}_{\mathcal{F}_{can}}(K_{\infty},T))=r$.
This proves the corollary .
\end{proof}
\section{\bf Proof of Theorem \ref{Mazigh 2}}
 We will take the notations and the conventions of \cite{Ma 16}. In
particular, the construction of the group of Rubin-Stark units
\cite[Definition 4.5]{Ma 16} goes on the  same lines. \vskip 5pt For
a nonnegative integer $n$, the product of all distinct non $2$-adic
prime ideals dividing the finite part of the conductor of $L_{n}/K$
is denoted by $\widetilde{\mathfrak{h}}$, which does not depend on
$n$. For any ideal $\mathfrak{g}\mid \widetilde{\mathfrak{h}}$, the
maximal subextension of $L_{n}$ whose conductor is prime to
$\widetilde{\mathfrak{h}}\mathfrak{g}^{-1}$ is denoted by
$L_{n,\mathfrak{g}}$. Let us fix a finite set $S$ of places
containing all infinite places, and at least one finite place, but
does not contain any $2$-adic prime of $K$, and a second finite,
nonempty set $\mathcal{T}$ of places of $K$, disjoint from $S$ and
does not contain any $2$-adic prime of $K$. Let
$S_{L_{n,\mathfrak{g}}}=S\cup \mathrm{Ram}(L_{n,\mathfrak{g}}/K)$,
where $\mathrm{Ram}(L_{n,\mathfrak{g}}/K)$ denotes the set of
ramified primes in $L_{n,\mathfrak{g}}/K$. Since
$L_{n,\mathfrak{g}}$ is a totally real field, the hypotheses
$2.1.1$-$2.1.5$ in \cite[Hypotheses 2.1]{Rubin96} on
$S_{L_{n,\mathfrak{g}}}$, $\mathcal{T}$ and $r$  are satisfied.\\
 Let $\mathcal{E}_{n}$ (resp.
$\mathcal{E}_{n}^{+}$) denote  the group of units (resp. totally
positive units) of $L_{n}$. Following \cite{Ma 16}, we define
\begin{deft}\label{Definition of Rubin Strak module}
Let $n$ be a nonnegative integer. We denote by $\mathrm{St}_{n}^{+}$
the $\mathbb{Z}[\mathrm{Gal}(L_{n}/K)]$-module generated by the
inverse images of $\varepsilon_{n,\mathfrak{g},\mathcal{T}}$ under
the map $ \xymatrix@=2pc{ \bigwedge^{r}\mathcal{E}_{n}^{+}\ar[r]&
 \mathbb{Q}\otimes\bigwedge^{r}\mathcal{E}_{n}}$
 for all $\mathfrak{g}\mid \widetilde{\mathfrak{h}}$,
 where $\varepsilon_{n,\mathfrak{g},\mathcal{T}}$ is the Rubin-Stark element of
 the Rubin-Stark conjecture $\mathbf{RS}(L_{n,\mathfrak{g}}/K,S_{L_{n,\mathfrak{g}}},\mathcal{T},r)$
 \cite[Conjecture $\mathrm{B}^\prime$]{Rubin96}.
\end{deft}
 Recall that for any number field $F$, Kummer theory gives
a canonical isomorphism
\begin{equation*}
H^{1}(F,\mathbb{Z}_{2}(1))\cong F^{\times,\wedge}:=\varprojlim
F^{\times}/(F^{\times})^{2^{n}}.
\end{equation*}
Since  $\chi(G_{L_{n}})=1$ for every $n\geq0$,
\begin{equation*}
H^{1}(L_{n},\mathbb{Z}_{2}(1))\otimes \mathcal{O}(\chi^{-1})\cong
H^{1}(L_{n},\mathbb{Z}_{2}(1)\otimes\mathcal{O}(\chi^{-1})).
\end{equation*}
 Therefore
 \begin{equation}\label{ismophism Lng}
L_{n}^{\times,\wedge}\otimes \mathcal{O}(\chi^{-1})\cong
H^{1}(L_{n},\mathbb{Z}_{2}(1)\otimes\mathcal{O}(\chi^{-1})).
 \end{equation}
For simplicity of notation, we let $\varepsilon_{n}$ stand for the
Rubin-Stark element
$\varepsilon_{n,\widetilde{\mathfrak{h}},\mathcal{T}}$ for
$\mathbf{RS}(L_{n}/K,S_{L_{n}},\mathcal{T},r)$. It is well known
that $\varepsilon_{n}$ can be uniquely written as
$\varepsilon_{1}\wedge\cdots\wedge \varepsilon_{r}$, with
$\varepsilon_{i}\in \mathbb{Q}\otimes L_{n}^{\times}$.
 Let
\begin{equation}\label{definition of chi element of p copletion}
\varepsilon_{n,\chi}:=\widehat{\varepsilon_{1}}\otimes1_{\chi^{-1}}\wedge\cdots\wedge
\widehat{\varepsilon_{r}}\otimes 1_{\chi^{-1}},
\end{equation}
where $\widehat{\varepsilon_{i}}$ is the image of $\varepsilon_{i}$
by the natural map $ \xymatrix@=1.5pc{ \mathbb{Q}\otimes
L_{n}^{\times}\ar[r]& \mathbb{Q}_{2}\otimes_{\mathbb{Z}_{2}}
L_{n}^{\times, \wedge}}.$
 Then, under the isomorphism $(\ref{ismophism Lng})$, we can
view each
\begin{equation*}
\mbox{ $\varepsilon_{n,\chi}$ as an element of
$\mathbb{Q}_{2}\otimes\displaystyle{\bigwedge^{r}}H^{1}(L_{n},\mathbb{Z}_{2}(1)\otimes\mathcal{O}(\chi^{-1})).$
}
\end{equation*}
 For every $n\geq 0$, we define
\begin{equation}\label{definition of Engr}
 c_{n}= \mathrm{cor}_{L_{n+1},K_{n}}^{(r)} (\varepsilon_{n+1,\chi})
\end{equation}
where $\mathrm{cor}_{L_{n+1},K_{n}}^{(r)}$ is the map
\begin{equation*}
\xymatrix@=2pc{ \mathbb{Q}_{2}\otimes
\displaystyle{\bigwedge^{r}}H^{1}(L_{n+1},T)\ar[r]&
\mathbb{Q}_{2}\otimes \displaystyle{\bigwedge^{r}}H^{1}(K_{n},T)}
\end{equation*}
induced by the corestrection map
\begin{equation*}
\xymatrix@=2pc{ \mathrm{cor}_{L_{n+1},K_{n}}:
H^{1}(L_{n+1},T)\ar[r]& H^{1}(K_{n},T)}.
\end{equation*}
Let $\iota$ denote the composite of the natural maps:
\begin{equation*}
 \xymatrix@=1.5pc{
 \displaystyle{\bigwedge^{r}\varprojlim_{n}}H^{1}_{+}(K_{n},T)\ar[r]&\displaystyle{
 \varprojlim_{n}\bigwedge^{r}}H^{1}_{+}(K_{n},T)\ar[r]&\displaystyle{\varprojlim_{n}
 (\mathbb{Q}_{2}\otimes_{\mathbb{Z}_{2}}\bigwedge^{r}}H^{1}(K_{n},T))}
 \end{equation*}
 and let
 \begin{equation*}
 \xymatrix@=2pc{
 \tau:\,\displaystyle{\bigwedge^{r}\varprojlim_{n}}H^{1}_{+}(K_{n},T)\ar[r]&
  \displaystyle{\bigwedge^{r}\varprojlim_{n}}H^{1}(K_{n},T)}.
 \end{equation*}
 By Corollary \ref{Corollary rank of positive structure}, it is
 clear that $\tau$ is injective.\\
 Let $c_{\infty}:=\{c_{n}\}_{n\geq
0}\in\displaystyle{\varprojlim_{n}}(\mathbb{Q}_{2}\otimes_{\mathbb{Z}_{2}}\bigwedge^{r}H^{1}(K_{n},T))$,
where $c_{n}$ is defined in $(\ref{definition of Engr})$. The
collection $\{c_{n}\}_{n\geq 0}$ gives rise to an Euler system for
the $2$-adic representation
$T=\mathbb{Z}_{2}(1)\otimes\mathcal{O}(\chi^{-1})$, in the sense of
\cite[Definition 2.1.1]{Rubin00} (see $\mathrm{e.g.}$
\cite[Proposition 4.7]{Ma 16}). Recall that we can associate a
Kolyvagin derivative class to any Euler system for any $2$-adic
representation \cite[\S 4.4]{Rubin00}. In the sense of
\cite[Definition 3.1.3]{MR04}, this turn out to construct  a
Kolyvagin system of the canonical Selmer structure
$\mathcal{F}_{can}$ \cite[Theorem 3.2.4]{MR04}.
\begin{lem}\label{lemme mazigh}
Let $\eta$ denote the $\mathrm{lcm}$ of the $2$-adic numbers
$1-\chi(\mathrm{Frob}_{\mathfrak{p}})$, where $\mathfrak{p}$ run
through the set of $2$-adic place of $K$. Let $\mathbf{c}$ be an
element in $\iota^{-1}(\eta.c_{\infty})$. Under the hypothesis
$(\mathcal{H}_{3})$,
\begin{equation*}
\mathrm{char}(H^{1}_{\mathcal{F}_{can}^{\ast}}(K_{\infty},T^{\ast})^{\vee})\quad\mbox{divides}\quad
\mathrm{char}((\bigwedge^{r}H^{1}_{\mathcal{F}_{can}}(K_{\infty},T))/\Lambda.\tau(\mathbf{c})).
\end{equation*}
\end{lem}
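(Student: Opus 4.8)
The plan is to deduce Lemma \ref{lemme mazigh} from the general Euler/Kolyvagin system bound of Mazur--Rubin \cite[Theorem 3.2.4, 5.3.10]{MR04} applied to the Euler system $c_{\infty}$ built from the Rubin--Stark elements. First I would recall that the corestriction-compatible family $\{c_n\}$ is an Euler system for $T$ in the sense of \cite[Definition 2.1.1]{Rubin00}; the Kolyvagin derivative construction \cite[\S 4.4]{Rubin00}, or equivalently \cite[Theorem 3.2.4]{MR04}, produces a Kolyvagin system $\boldsymbol{\kappa}$ for the canonical Selmer structure $\mathcal{F}_{can}$, whose leading term at the trivial conductor is (up to the usual normalization) the image of $c_\infty$ in $H^1_{\mathcal{F}_{can}}(K_\infty,T)$. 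The core input is the Iwasawa-theoretic Kolyvagin system bound: if $\boldsymbol{\kappa}$ is a nonzero Kolyvagin system for $(T,\mathcal{F}_{can})$ over $\Lambda$, then
\begin{equation*}
\mathrm{char}\big(H^1_{\mathcal{F}_{can}^{\ast}}(K_\infty,T^{\ast})^{\vee}\big)\ \text{divides}\ \mathrm{char}\big((\bigwedge^{r}H^1_{\mathcal{F}_{can}}(K_\infty,T))/\Lambda\cdot\kappa_1\big),
\end{equation*}
which is exactly the statement in \cite[Theorem 5.3.10]{MR04} (valid because, by Corollary \ref{Corollary rank of positive structure} and Theorem \ref{Theorem free}, $H^1_{\mathcal{F}_{can}}(K_\infty,T)$ is $\Lambda$-free of rank $r=[K:\mathbb{Q}]$, so the ``core rank'' hypothesis holds). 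Here I must check that hypothesis $(\mathcal{H}_3)$ (Leopoldt for the layers $F/L$ in $L_\infty$) guarantees the running hypotheses of \cite[Chapter 5]{MR04} — non-degeneracy of the pairing, finiteness of the relevant local cohomology, and that the Kolyvagin system is primitive/nonzero — so that the divisibility is not vacuous.

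Next I would identify $\kappa_1$ with $\tau(\mathbf{c})$ up to the factor $\eta$. The point is that the Rubin--Stark Euler system lands a priori only in $\mathbb{Q}_2\otimes\bigwedge^{r}H^1(K_n,T)$, not integrally, and the maps $\iota,\tau$ of the excerpt compare the exterior power of the inverse limit of the \emph{totally positive} cohomology $H^1_+(K_n,T)$ with the inverse limit of the full $H^1(K_n,T)$. Multiplying $c_\infty$ by $\eta=\mathrm{lcm}(1-\chi(\mathrm{Frob}_{\mathfrak{p}}))$ clears exactly the denominators coming from the $2$-adic Euler factors in the corestriction formula $(\ref{definition of Engr})$ (the corestriction from $L_{n+1}$ to $K_n$ introduces the local factors $1-\chi(\mathrm{Frob}_{\mathfrak{p}})^{-1}$ at the $2$-adic primes, which under $(\mathcal{H}_2)$ are nonzero but possibly non-units), so that $\eta\cdot c_\infty$ is in the image of $\iota$, and any preimage $\mathbf{c}\in\iota^{-1}(\eta\cdot c_\infty)$ is an element of $\bigwedge^{r}\varprojlim_n H^1_+(K_n,T)$. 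Then $\tau(\mathbf{c})$ lies in $\bigwedge^{r}H^1_{\mathcal{F}_{can}}(K_\infty,T)$ by Proposition \ref{Proposition free}(1) and the injectivity of $\tau$ noted just before the lemma, and one checks it is $\Lambda$-proportional to the leading Kolyvagin class $\kappa_1$ up to a $\Lambda$-unit, using that both arise from the same Euler system.

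Finally I would assemble the two pieces: the Mazur--Rubin bound gives the divisibility with $\kappa_1$ in the denominator, and the identification $\Lambda\cdot\kappa_1 = \Lambda\cdot\tau(\mathbf{c})$ (the factor $\eta$ having been absorbed into the choice of $\mathbf{c}$) yields the asserted divisibility
\begin{equation*}
\mathrm{char}(H^1_{\mathcal{F}_{can}^{\ast}}(K_\infty,T^{\ast})^{\vee})\ \text{divides}\ \mathrm{char}\big((\bigwedge^{r}H^1_{\mathcal{F}_{can}}(K_\infty,T))/\Lambda\cdot\tau(\mathbf{c})\big).
\end{equation*}
I expect the main obstacle to be the second step: carefully tracking the denominators introduced by corestriction in the Rubin--Stark Euler system and proving that exactly $\eta$ suffices to make $c_\infty$ integral with respect to $\iota$, together with verifying that $\tau(\mathbf{c})$ really is (a $\Lambda$-generator multiple of) the Kolyvagin leading term rather than merely lying in the same module — this is where the interplay between the totally positive cohomology $H^1_{+}$ and the ordinary $H^1$, controlled by Theorem \ref{Theorem free} and the vanishing $H^0(K_{n,w},T)=0$ at real places, is essential. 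The invocation of \cite[Theorem 5.3.10]{MR04} itself is largely a black box once the $\Lambda$-freeness and rank statements of Corollary \ref{Corollary rank of positive structure} are in hand.
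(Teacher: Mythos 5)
Your proposal misidentifies the framework and consequently misattributes the key bound. The Rubin--Stark classes $c_n$ live in $\mathbb{Q}_2\otimes\bigwedge^{r}H^1(K_n,T)$, so this is a \emph{rank-$r$} Euler system; the statement you quote as \cite[Theorem 5.3.10]{MR04}, with a $\bigwedge^r$ in the denominator, is not what that theorem says --- Mazur--Rubin's book treats core rank one, and their Kolyvagin class $\kappa_1$ lives in $H^1_{\mathcal{F}_{can}}(K_\infty,T)$, not in its $r$-th exterior power. Your proposed identification of $\kappa_1$ with $\tau(\mathbf{c})$ is therefore a type error whenever $r>1$: the two objects live in different modules, and the rank-one machinery of \cite[Theorem 3.2.4]{MR04} does not produce a Kolyvagin system from $\{c_n\}$ directly. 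Handling the higher core rank is precisely the nontrivial part of the argument.

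What the paper actually does is cite back to \cite[Theorem 6.3]{Ma 16}, where the author builds an \emph{ad hoc} Selmer structure (\cite[Definition 5.6]{Ma 16}) together with an associated Kolyvagin system (\cite[Lemma 5.13]{Ma 16}), and then proves a bespoke variant of Rubin's theorem \cite[Theorem 2.3.3]{Rubin00} for that structure (\cite[Theorem 6.1]{Ma 16}). The technical input making this work is a structure theorem for the local Iwasawa cohomology $H^1_{Iw}(K(\mathfrak{r})_v,T)$ at auxiliary ramified primes, deduced from Greither \cite[Theorem 2.2]{Greither96}. The entire content of the proof in the present paper is the observation that the one input which might have been $p$-odd--specific --- Greither's local structure result --- is also available at $p=2$ by \cite[Proposition 2.10]{Greither92}, so the argument of \cite{Ma 16} carries over verbatim. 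You do not address this $p=2$ issue at all, and it is the point of the lemma. Your discussion of $\eta$ clearing Euler-factor denominators at $2$-adic primes is in the right spirit, but the heavy lifting --- the ad hoc Selmer structure and Greither's theorem at $p=2$ --- is absent, so the proposal has a genuine gap.
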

\begin{proof} The proof is identical to the proof of Theorem $6.3$ of \cite{Ma
16} line by line. To obtain Theorem \cite[Theorem 6.3]{Ma 16} we
proved a variant of Rubin's theorem (\cite[Theorem 2.3.3]{Rubin00}),
loc.cit \cite[Theorem 6.1]{Ma 16} by constructing an ad-hoc Selmer
structure \cite[Definition 5.6]{Ma 16} and an associated Kolyvagin
system \cite[Lemma 5.13]{Ma 16}. The construction uses the structure
of $H^{1}_{Iw}(K(\mathfrak{r})_{v},T)$ \cite[Theorem 5.1]{Ma 16},
deduced from a result of Greither \cite[Theorem 2.2]{Greither96}.
Since Greither's result is also available for $p=2$
\cite[Proposition 2.10]{Greither92}, the strategy used to obtain
\cite[Theorem 6.3]{Ma 16} is also applicable for Lemma \ref{lemme
mazigh}.
\end{proof}
For each place $v$ of $K$, let
\begin{equation*}
H^{1}_{Iw}(K_{v},T)=\displaystyle{\varprojlim_{n}}(\displaystyle{\oplus_{w\mid
v}}H^{1}(K_{n,w},T)).
\end{equation*}
By a standard argument (see \cite[Lemma 5.3.1]{MR04}) we have
\begin{equation*}
H^{1}_{Iw}(K_{v},T)\cong H^{1}(K_{v},T\otimes\Lambda).
\end{equation*}
Hence the Proposition $4.2.3$ of \cite{Nekovar06} shows that
$H^{1}_{Iw}(K_{v},T)$ is a finitely generated
$\Lambda$-module. \\
The following proposition is the key to the proof of our main
theorem.
\begin{pro}\label{proposition mazigh 1} With the assumptions of
Lemma \ref{lemme mazigh}, we have
\begin{equation*}
\mathrm{char}(H^{1}_{\mathcal{F}_{can}^{+,\ast}}(K_{\infty},T^{\ast})^{\vee})\quad\mbox{divides}\quad
\mathrm{char}((\bigwedge^{r}H^{1}_{\mathcal{F}_{can}^{+}}(K_{\infty},T))/\Lambda.\mathbf{c}).\mathrm{char}
(\displaystyle{\oplus_{v\mid\infty}}H^{1}_{Iw}(K_{v},T)).
\end{equation*}
\end{pro}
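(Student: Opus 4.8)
The plan is to deduce this from the bound of Lemma~\ref{lemme mazigh} by comparing the Selmer structures $\mathcal{F}_{can}^{+}$ and $\mathcal{F}_{can}$ through a five-term exact sequence, arranged so that the correction term it produces cancels the one coming from the exterior power. First I would write down, for each $n$, the exact sequence $(\ref{exacte sequence of selmer structures})$ attached to $\mathcal{F}_{can}^{+}\leq\mathcal{F}_{can}$. These structures agree away from $\infty$, and at an infinite place $H^{1}_{\mathcal{F}_{can}}(K_{n,w},T)/H^{1}_{\mathcal{F}_{can}^{+}}(K_{n,w},T)=H^{1}(K_{n,w},T)$, so it reads
\begin{equation*}
0\to H^{1}_{\mathcal{F}_{can}^{+}}(K_{n},T)\to H^{1}_{\mathcal{F}_{can}}(K_{n},T)\to\bigoplus_{w\mid\infty}H^{1}(K_{n,w},T)\to H^{1}_{\mathcal{F}_{can}^{+,\ast}}(K_{n},T^{\ast})^{\vee}\to H^{1}_{\mathcal{F}_{can}^{\ast}}(K_{n},T^{\ast})^{\vee}\to0 .
\end{equation*}
Every term is a finitely (co-)generated $\mathcal{O}$-module, hence the relevant inverse systems (under corestriction on the $T$-side and under the dual of restriction on the $T^{\ast}$-side) are Mittag--Leffler; so $\varprojlim_{n}$ preserves exactness, and since $\varprojlim_{n}\bigoplus_{w\mid\infty}H^{1}(K_{n,w},T)=\bigoplus_{v\mid\infty}H^{1}_{Iw}(K_{v},T)$ by definition, I obtain an exact sequence of $\Lambda$-modules
\begin{equation*}
0\to H^{1}_{\mathcal{F}_{can}^{+}}(K_{\infty},T)\xrightarrow{\,j\,}H^{1}_{\mathcal{F}_{can}}(K_{\infty},T)\to\bigoplus_{v\mid\infty}H^{1}_{Iw}(K_{v},T)\to H^{1}_{\mathcal{F}_{can}^{+,\ast}}(K_{\infty},T^{\ast})^{\vee}\to H^{1}_{\mathcal{F}_{can}^{\ast}}(K_{\infty},T^{\ast})^{\vee}\to0 .
\end{equation*}

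Next I would analyze the cokernel $Q:=\mathrm{coker}(j)$. Since $L$ is totally real, every infinite place of $K$ is completely decomposed in $L/K$, so Theorem~\ref{Theorem free} and Corollary~\ref{Corollary rank of positive structure} apply and both $H^{1}_{\mathcal{F}_{can}^{+}}(K_{\infty},T)$ and $H^{1}_{\mathcal{F}_{can}}(K_{\infty},T)$ are free $\Lambda$-modules of rank $r$. Hence $j$ is injective of full rank, $Q$ is $\Lambda$-torsion, and $\tau=\bigwedge^{r}j$ is, after choosing $\Lambda$-bases, multiplication by a determinant $\delta$ of $j$ with $(\delta)=\mathrm{char}(Q)$ and $\delta\neq0$ because $\Lambda\cong\mathcal{O}[[X]]$ is a domain. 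Writing $\mathbf{c}$ in $\bigwedge^{r}H^{1}_{\mathcal{F}_{can}^{+}}(K_{\infty},T)\cong\Lambda$ as $\gamma$ times the chosen generator, this gives
\begin{equation*}
\mathrm{char}\!\big((\textstyle\bigwedge^{r}H^{1}_{\mathcal{F}_{can}}(K_{\infty},T))/\Lambda\tau(\mathbf{c})\big)=(\delta\gamma)=\mathrm{char}(Q)\cdot\mathrm{char}\!\big((\textstyle\bigwedge^{r}H^{1}_{\mathcal{F}_{can}^{+}}(K_{\infty},T))/\Lambda\mathbf{c}\big).
\end{equation*}

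Then I would check that the four-term tail of the sequence of the first paragraph consists of $\Lambda$-torsion modules: $Q$ by the above; $\bigoplus_{v\mid\infty}H^{1}_{Iw}(K_{v},T)$ because it is finitely generated over $\Lambda$ (as recalled before the statement) and killed by $2$, complex conjugation acting by $-1$ on $T\otimes\Lambda$ at an infinite place so that $H^{1}_{Iw}(K_{v},T)\cong(T\otimes\Lambda)/2(T\otimes\Lambda)$; and $H^{1}_{\mathcal{F}_{can}^{\ast}}(K_{\infty},T^{\ast})^{\vee}$ by Lemma~\ref{lemme mazigh}, its characteristic ideal being non-zero since the Rubin--Stark element, hence $\mathbf{c}$, is non-zero by non-vanishing of the relevant $L$-value. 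Splitting the tail into two short exact sequences through the image of the middle map and using multiplicativity of characteristic ideals on $\Lambda$-torsion modules yields
\begin{equation*}
\mathrm{char}(Q)\cdot\mathrm{char}\!\big(H^{1}_{\mathcal{F}_{can}^{+,\ast}}(K_{\infty},T^{\ast})^{\vee}\big)=\mathrm{char}\!\Big(\bigoplus_{v\mid\infty}H^{1}_{Iw}(K_{v},T)\Big)\cdot\mathrm{char}\!\big(H^{1}_{\mathcal{F}_{can}^{\ast}}(K_{\infty},T^{\ast})^{\vee}\big).
\end{equation*}

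Finally I would combine these. Lemma~\ref{lemme mazigh} together with the displayed identity of the second paragraph shows that $\mathrm{char}(H^{1}_{\mathcal{F}_{can}^{\ast}}(K_{\infty},T^{\ast})^{\vee})$ divides $\mathrm{char}(Q)\cdot\mathrm{char}((\bigwedge^{r}H^{1}_{\mathcal{F}_{can}^{+}}(K_{\infty},T))/\Lambda\mathbf{c})$; inserting this into the displayed identity of the third paragraph and cancelling the non-zero factor $\mathrm{char}(Q)$ (legitimate since $\Lambda$ is a domain) gives exactly the asserted divisibility. The main obstacle is the first step: one has to make sure the five-term sequence genuinely survives the Iwasawa limit — that all relevant $\varprojlim^{1}$ vanish and that the connecting maps are compatible along the tower — after which everything is a bookkeeping of characteristic ideals whose only delicate point is to carry the torsion module $Q$ through both the exterior-power computation and the exact sequence so that it cancels in the end.
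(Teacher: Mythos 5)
Your argument is correct and follows the same route as the paper: pass the Mazur--Rubin five-term sequence for $\mathcal{F}_{can}^{+}\leq\mathcal{F}_{can}$ to the Iwasawa limit, use the freeness from Corollary~\ref{Corollary rank of positive structure} to compare the two exterior-power quotients via $\mathrm{char}(\mathrm{coker}\,\beta^{(r)})=\mathrm{char}(\mathrm{coker}\,\beta)$, then combine the resulting equality of characteristic ideals with Lemma~\ref{lemme mazigh}. You supply a few details the paper leaves implicit (the $\varprojlim$ exactness, the torsion checks, the determinant identity in place of the Bourbaki citation), but the decomposition and the bookkeeping are identical.
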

\begin{proof}
Since $\mathcal{F}_{can}^{+}\leq \mathcal{F}_{can}$, we have an
exact sequence
\begin{equation}\label{exact sequence proof of pro}
\xymatrix@=1.5pc{H^{1}_{\mathcal{F}_{can}^{+}}(K_{\infty},T)\ar@{^{(}->}[r]&
H^{1}_{\mathcal{F}_{can}}(K_{\infty},T)\ar[r]&
\displaystyle{\oplus_{v\mid\infty}}H^{1}_{Iw}(K_{v},T)\ar[r]&
H^{1}_{\mathcal{F}_{can}^{+,\ast}}(K_{\infty},T^{\ast})^{\vee}\ar@{->>}[r]&H^{1}_{\mathcal{F}_{can}^{\ast}}
(K_{\infty},T^{\ast})^{\vee} }.
\end{equation}
Corollary \ref{Corollary rank of positive structure} shows that the
$\Lambda$-modules $H^{1}_{\mathcal{F}_{can}^{+}}(K_{\infty},T)$ and
$H^{1}_{\mathcal{F}_{can}}(K_{\infty},T)$ are $\Lambda$-free of rank
$r=[K:\mathbb{Q}]$, and therefore the injection $\xymatrix@=2pc{
H^{1}_{\mathcal{F}_{can}^{+}}(K_{\infty},T)\ar@{^{(}->}[r]^-{\beta}&H^{1}_{\mathcal{F}_{can}^{+}}(K_{\infty},T)
}$ induces an exact sequence:
\begin{equation*}
\xymatrix@=2pc{ 0\ar[r]&
(\displaystyle{\bigwedge^{r}}H^{1}_{\mathcal{F}_{can}^{+}}(K_{\infty},T))/\Lambda.\mathbf{c}\ar[r]&
(\displaystyle{\bigwedge^{r}}H^{1}_{\mathcal{F}_{can}}(K_{\infty},T))/\Lambda.\mathbf{c}\ar@{->>}[r]&
\mathrm{coker}(\beta^{(r)})},
\end{equation*}
where $\beta^{(r)}$ denotes the map induced on the $r$-th exterior
power. Using the fact that
\begin{equation*}
\mathrm{char}(\mathrm{coker}(\beta))=\mathrm{char}(\mathrm{coker}(\beta^{(r)}))
\end{equation*}
($\mathrm{cf.}$\,\cite[page 258]{Bourbaki}), we get
 \begin{align*}
  & \mathrm{char}((\bigwedge^{r}H^{1}_{\mathcal{F}_{can}^{+}}(K_{\infty},T))/\Lambda.\mathbf{c}).\mathrm{char}
(\displaystyle{\oplus_{v\mid\infty}}H^{1}_{Iw}(K_{v},T)).
\mathrm{char}(H^{1}_{\mathcal{F}_{can}^{\ast}}(K_{\infty},T^{\ast})^{\vee})&
\\
=\; &
\mathrm{char}((\bigwedge^{r}H^{1}_{\mathcal{F}_{can}}(K_{\infty},T))/\Lambda.\mathbf{c}).
\mathrm{char}(H^{1}_{\mathcal{F}_{can}^{+,\ast}}(K_{\infty},T^{\ast})^{\vee}).
&
 \end{align*}
 Lemma \ref{lemme mazigh} permits to conclude.
\end{proof}
Let $n$ be a nonnegative integer, we write $A^{+}_{n}$ for the
$2$-part of the narrow  class group of $L_{n}$,
$\mathcal{E}_{n}^{\prime}$ for the $2$-units of $L_{n}$ and
$\mathcal{E}_{n}^{\prime,+}$ for the totally positive $2$-units of
$L_{n}$. Let
\begin{equation*}
A^{+}_{\infty}:=\displaystyle{\varprojlim_{n}}A_{n}^{+},\quad\quad
\displaystyle{\widehat{\mathcal{E}_{\infty}^{\prime}}}:=\displaystyle{\varprojlim_{n}}\displaystyle{\widehat{\mathcal{E}_{n}^{\prime}}},
\quad\quad
\displaystyle{\widehat{\mathcal{E}_{\infty}^{\prime,+}}}:=\displaystyle{\varprojlim_{n}}\displaystyle{\widehat{\mathcal{E}_{n}^{\prime,+}}},
\end{equation*}
where all inverse limits are taken with respect to norm maps. It is
well known that
\begin{equation*}
\displaystyle{\varprojlim_{n}}H^{1}(G_{L_{n},\Sigma},\mathbb{Z}_{2}(1))\cong
\displaystyle{\widehat{\mathcal{E}_{\infty}^{\prime}}}.
\end{equation*}
Since $L_{n}$ is  a totally real field, Proposition \ref{Proposition
properties of positive cohomology} leads an exact sequence
\begin{equation*}
\xymatrix@=2pc{ 0\ar[r]&
H^{1}_{+}(G_{L_{n},\Sigma},\mathbb{Z}_{2}(1))\ar[r]&
H^{1}(G_{L_{n},\Sigma},\mathbb{Z}_{2}(1))\ar[r]&
\displaystyle{\oplus_{w\mid\infty}}H^{1}(L_{n,w},\mathbb{Z}_{2}(1)))}.
\end{equation*}
 Hence
\begin{equation}\label{Isomorphism totally positive units}
\displaystyle{\varprojlim_{n}}H^{1}_{+}(G_{L_{n},\Sigma},\mathbb{Z}_{2}(1))\cong
\displaystyle{\widehat{\mathcal{E}_{\infty}^{\prime,+}}}.
\end{equation}
 Recall that $St_{n}^{+}$ denotes the
  $\mathbb{Z}[\mathrm{Gal}(L_{n}/K)]$-module constructed by the
  Rubin-Stark elements (see Definition \ref{Definition of Rubin Strak
  module}). Recall also that
\begin{equation*}
c_{n}=\mathrm{cor}^{(r)}_{L_{n+1},K_{n}}(\varepsilon_{n+1,\chi})
\end{equation*}
denotes the element defined in $(\ref{definition of Engr})$. Let
$St^{+}_{\infty}:=\varprojlim_{n}St_{n}^{+}$ and let
$\varepsilon_{\infty,\chi}:=\{\varepsilon_{n,\chi}\}_{n\geq 1}$.
Since for $n\geq 1$, $
c_{n}=\mathrm{cor}^{(r)}_{L_{n},K_{n}}(\varepsilon_{n,\chi}) $, it
follows that
\begin{eqnarray*}
  \mathrm{res}^{(r)}_{K_{n},L_{n}}(c_{n}) &=& \mathrm{res}^{(r)}_{K_{n},L_{n}}(\mathrm{cor}^{(r)}_{L_{n},K_{n}}(\varepsilon_{n,\chi})) \\
   &=& |\Delta|^{r-1}N_{\Delta}(\varepsilon_{n,\chi}).
\end{eqnarray*}
 Therefore, using the fact that the restriction map
\begin{equation*}
\xymatrix@=2pc{\mathrm{res}_{K_{n},L_{n}} : H^{1}(K_{n},T)\ar[r]&
H^{1}(L_{n},T)^{\mathrm{Gal}(L_{n}/K_{n})}}
\end{equation*}
is an isomorphism by $(\ref{the restriction isomorphism})$, we
obtain
\begin{equation*}
|\Delta|^{r-1}N_{\Delta}((\displaystyle{\widehat{St_{\infty}^{+}}})_{\chi})=
\Lambda \mathbf{c},
\end{equation*}
where $\mathbf{c}$ is the inverse image of
$|\Delta|^{r-1}N_{\Delta}(\varepsilon_{\infty,\chi})$ under the
composite map:
\begin{equation*}
\xymatrix@=2pc{\bigwedge^{r}\varprojlim_{n}H^{1}_{+}(K_{n},T)\ar[r]&
\varprojlim_{n}\bigwedge^{r}H^{1}_{+}(K_{n},T)\ar[r]&
\varprojlim_{n}(\mathbb{Q}_{2}\otimes_{\mathbb{Z}_{2}}\bigwedge^{r}H^{1}(K_{n},T))}.
\end{equation*}
Using Proposition $\ref{Proposition free}$ and the isomorphisms
$(\ref{the restriction isomorphism})$ and $(\ref{Isomorphism totally
positive units})$, we get
\begin{equation*}
H^{1}_{\mathcal{F}_{can}^{+}}(K_{\infty},T)\cong
(\displaystyle{\widehat{\mathcal{E}_{\infty}^{\prime,+}}}\otimes\mathcal{O}(\chi^{-1}))^{\mathrm{Gal}(L_{\infty}/K_{\infty})}.
\end{equation*}
 \noindent \textbf{Proof Theorem \ref{Mazigh 2}.}
 Consider the commutative exact diagram
\begin{equation*}
\xymatrix@=1.5pc{ &
(\widehat{\mathrm{St}_{\infty}^{+}})_{\chi}\ar[r]\ar@{->>}[d]^-{|\Delta|^{r-1}N_{\Delta}}&
\displaystyle{\bigwedge^{r}}(\widehat{\mathcal{E}_{\infty}^{+}})_{\chi}\ar@{->>}[r]\ar[d]^-{N^{(r)}_{\Delta}}&
\big(\displaystyle{\bigwedge^{r}}\widehat{\mathcal{E}_{\infty}^{+}}/\widehat{\mathrm{St}_{\infty}^{+}}\big)_{\chi}\ar[d]\\
0\ar[r]&|\Delta|^{r-1}N_{\Delta}(\mathbf{c})\ar[r]&
\displaystyle{\bigwedge^{r}}(\widehat{\mathcal{E}_{\infty}^{+}})^{\chi}\ar@{->>}[r]\ar@{->>}[d]&
\displaystyle{\bigwedge^{r}}(\widehat{\mathcal{E}_{\infty}^{+}})^{\chi}/|\Delta|^{r-1}N_{\Delta}(\mathbf{c})\\
  &  & \mathrm{coker}(N^{(r)}_{\Delta})& }
\end{equation*}
where
$(\widehat{\mathcal{E}_{\infty}^{+}})^{\chi}=(\widehat{\mathcal{E}_{\infty}^{+}}\otimes_{\mathbb{Z}_{2}}
\mathcal{O}(\chi^{-1}))^{\Delta}$. Then
 \begin{equation*}
 \mathrm{char}(\displaystyle{\bigwedge^{r}}(\widehat{\mathcal{E}_{\infty}})^{\chi}/
 |\Delta|^{r-1}N_{\Delta}(\mathbf{c}))
\quad\mbox{divides}\quad
\mathrm{char}\bigg(\big(\displaystyle{\bigwedge^{r}}\widehat{\mathcal{E}_{\infty}}/\widehat{\mathrm{St}_{\infty}}\big)_{\chi}\bigg).
\mathrm{char}(\mathrm{coker}(N^{(r)}_{\Delta}))
\end{equation*}
Since $\chi(D_{v}(L/K))\neq 1$ for any $2$-adic prime of $K$, we get
\begin{equation*}
(\widehat{\mathcal{E}^{\prime,+}_{\infty}}\otimes_{\mathbb{Z}_{2}}\mathcal{O}(\chi^{-1}))^{\Delta}
\cong
(\widehat{\mathcal{E}_{\infty}^{+}}\otimes_{\mathbb{Z}_{2}}\mathcal{O}(\chi^{-1}))^{\Delta}.
\end{equation*}
Hence by Proposition \ref{proposition mazigh 1} and Proposition
\ref{chi quotient of restrincted class group}, we get
\begin{equation*}
\mathrm{char}((A^{+}_{\infty})_{\chi})\quad \mbox{divides}\quad
\lambda. \mathrm{char}
\bigg(\big(\displaystyle{\bigwedge^{r}}\widehat{\mathcal{E}_{\infty}^{+}}/\widehat{\mathrm{St}_{\infty}^{+}}\big)_{\chi}\bigg).
\end{equation*}
where
\begin{equation}\label{lamda factur}
\lambda=\eta.\mathrm{char}(\mathrm{coker}(N^{(r)}_{\Delta})).\mathrm{char}
(\displaystyle{\oplus_{v\mid\infty}}(H^{1}_{Iw}(K_{v},T)))
\end{equation}
and $\eta$ is the $\mathrm{lcm}$ of the $2$-adic numbers
$1-\chi(\mathrm{Frob}_{\mathfrak{p}})$, where $\mathfrak{p}$ run
through the set of $2$-adic place of $K$.
\begin{rem}
The cokernel of the morphism
\begin{equation*}
\xymatrix@=2pc{(\widehat{\mathcal{E}_{\infty}^{+}})_{\chi}\ar[r]^-{N_{\Delta}}&
(\widehat{\mathcal{E}_{\infty}^{+}})^{\chi}}
\end{equation*}
is  isomorphic to $\widehat{H}^{0}(\Delta,
\widehat{\mathcal{E}_{\infty}^{+}}\otimes_{\mathbb{Z}_{2}}\mathcal{O}(\chi^{-1}))$,
where $\widehat{H}^{0}(.,.)$ denotes the modified Tate cohomology
group. The module  $\mathrm{coker}(N^{(r)}_{\Delta})$ is then a
finitely generated torsion $\Lambda$-module, annihilated by
$|\Delta|$. Hence the characteristic ideal
$\mathrm{char}(\mathrm{coker}(N^{(r)}_{\Delta}))$ is a power of $2$.
By a standard argument (see $\mathrm{e.g.}$ \cite[Lemma
5.3.1]{MR04}) we have
\begin{equation*}
\displaystyle{\oplus_{v\mid\infty}}H^{1}_{Iw}(K_{v},T)\cong
\displaystyle{\oplus_{v\mid\infty}}H^{1}(K_{v},T\otimes\Lambda).
\end{equation*}
 Moreover, as the absolute Galois group of the field
 $K_{v}=\mathbb{R}$ is  cyclic  of order $2$, using the cohomology of cyclic
groups, we show that
\begin{equation*}
H^{1}(K_{v},T\otimes\Lambda)\cong
\mathbb{Z}/2\mathbb{Z}\otimes_{\mathbb{Z}}
(T\otimes_{\mathbb{Z}_{2}}\Lambda).
\end{equation*}
Therefore
\begin{equation*}
\lambda=2^{r}.\eta.\mathrm{char}(\mathrm{coker}(N^{(r)}_{\Delta}))
\end{equation*}
is a power of $2$, where $r=[K:\mathbb{Q}]$.
\end{rem}

\end{document}